  \def\<{{\langle}} 
  \def\>{{\rangle}}
  \def\note#1{{}}
  \def\note#1{} 
  \def\hom#1#2#3{{{\rm Hom}\sb{#1}(#2,#3)}}
  \def\beq{\begin{equation}} 
  \def\eeq{\end{equation}}
  \def\id{\mathrm{id}}
  \def\ot{{\otimes}}
  \newcounter{zlist} 
  \newenvironment{zlist}{\begin{list}{(\arabic{zlist})}{ 
  \usecounter{zlist}\leftmargin2.5em\labelwidth2em\labelsep0.5em 
  \topsep0.6ex
  \parsep0.3ex plus0.2ex minus0.1ex}}{\end{list}}
  \newcounter{blist} 
  \newenvironment{blist}{\begin{list}{(\alph{blist})}{ 
  \usecounter{blist}\leftmargin2.5em\labelwidth2em\labelsep0.5em 
  \topsep0.6ex 
  \parsep0.3ex plus0.2ex minus0.1ex}}{\end{list}} 
  \newcounter{rlist} 
  \newenvironment{rlist}{\begin{list}{(\roman{rlist})}{ 
  \usecounter{rlist}\leftmargin2.5em\labelwidth2em\labelsep0.5em 
  \topsep0.6ex 
  \parsep0.3ex plus0.2ex minus0.1ex}}{\end{list}}
\def\stac#1{\raise-.2cm\hbox{$\stackrel{\displaystyle\otimes}{\scriptscriptstyle{#1}}$}}
\def\cten#1{\raise-.2cm\hbox{$\stackrel{\displaystyle\widehat{\otimes}}
{\scriptscriptstyle{#1}}$}}
  \def\Label#1{\label{#1}\ifmmode\llap{[#1] }\else 
  \marginpar{\smash{\hbox{\tiny [#1]}}}\fi} 
  \def\Label{\label}
  \newtheorem{proposition}{Proposition}[section]
  \newtheorem{lemma}[proposition]{Lemma} 
  \newtheorem{corollary}[proposition]{Corollary} 
  \newtheorem{theorem}[proposition]{Theorem} 
\theoremstyle{definition} 
  \newtheorem{definition}[proposition]{Definition}
  \newtheorem{example}[proposition]{Example}
  \theoremstyle{remark} 
  \newtheorem{remark}[proposition]{Remark}
  \newcounter{c} 
  \newcommand{\etyk}[1]{\vspace{-7.4mm}$$\begin{equation}\Label{#1} 
  \addtocounter{c}{1}} 
  \renewcommand{\]}{\ifnum \value{c}=1 $$\else \end{equation}\fi} 
\def\ot{\otimes}
\def\CC{{\mathbb C}}
\def\KK{{\mathbb K}}
\def\NN{{\mathbb N}}
\def\ZZ{{\mathbb Z}}
\def\bbb{{\mathfrak b}}
\newcommand{\Cc}{\mathcal{C}}
\def\*C{{}^*\hspace*{-1pt}{\Cc}}
\def\text#1{{\rm {\rm #1}}}
 \def\1{\mathbf{1}}
 \def\Der#1#2{\mathrm{Der} _{#1}(#2)}
\begin{document}

\title{Skew derivations on generalized Weyl algebras}

\author{Munerah Almulhem}
\address{ Department of Mathematics, Swansea University, 
  Swansea SA2 8PP, U.K.} 
  \email{844404@swansea.ac.uk}   

\author{Tomasz Brzezi\'nski}
 \address{ Department of Mathematics, Swansea University, 
  Swansea SA2 8PP, U.K.\ \newline 
\indent Department of Mathematics, University of Bia{\l}ystok, K.\ Cio{\l}kowskiego  1M,
15-245 Bia\-{\l}ys\-tok, Poland} 
  \email{T.Brzezinski@swansea.ac.uk}   
 \subjclass[2010]{16S38; 16W25; 58B32} 
 \keywords{Generalized Weyl algebra; skew derivation}
 
\begin{abstract}
A wide class of skew derivations  on degree-one generalized Weyl algebras $R(a,\varphi)$ over a ring $R$ is constructed. All these derivations are twisted by a degree-counting extensions of automorphisms of $R$. It is determined which of the constructed derivations are $Q$-skew derivations. The compatibility of these skew derivations with the natural $\ZZ$-grading of $R(a,\varphi)$ is studied. Additional classes of skew derivations are constructed for generalized Weyl algebras given by an automorphism $\varphi$ of a finite order. Conditions that the central element $a$ that forms part of the structure of $R(a,\varphi)$ need to satisfy for the orthogonality of pairs of aforementioned skew derivations are derived.  General constructions are illustrated by classification of skew derivations of generalized Weyl algebras over the polynomial ring in one variable and with a linear polynomial as the central element. 
\end{abstract}
\maketitle

\section{Introduction}
This paper is devoted to the construction of a class of skew derivations of  {\em degree-one generalized Weyl algebras}. These algebras arose almost simultanously in ring theory \cite{Bav:gen}, \cite{Bav:ten}, \cite{Jor:pri} and non-commutative algebraic geometry  \cite{Ros:alg}, \cite{LunRos:Kas} (where they are called {\em rank-one hyperbolic algebras}), and they have become a subject of intensive study motivated in particular by the fact that many examples of algebras arising from quantum group theory or non-commutative geometry fall into this class. Generalized Weyl algebras $R(a,\varphi)$  are obtained as extensions of a ring $R$ by adjoining two additional generators that satisfy relations determined by an automorphism $\varphi$ of $R$ and an element $a$ in the centre of $R$ (see Section~\ref{sec.prel} for the precise definition), and they can be understood as generalizations of skew Laurent polynomial rings. 

The motivation for this study, results of which are being presented to the reader herewith, comes from non-commutative differential geometry, where skew derivations often play the role of vector fields (cf.\ \cite[Section~4.4]{Mad:int}) and may be used to equip non-commutative spaces with differential structures. As the Leibniz rule for skew derivations studied here is twisted by an automorphism, one first should make a choice of a suitable automorphism. Automorphism groups of generalized Weyl algebras have been studied in special cases, for example in the case of quantum generalized Weyl algebras \cite{BavJor:iso}, \cite{RicSol:iso}, \cite{SuaViv:aut} or generalized down-up algebras \cite{CarLop:aut}, to mention but a few. Our aim, however, is to work in a general degree-one situation, and hence we construct skew derivations twisted by automorphisms that can be defined for 
any generalized Weyl algebra over $R$. Such automorphisms are determined by an automorphism $\sigma$ of $R$ compatible with the data defining the generalized Weyl algebra, and a central unit $\mu$ in $R$ (see Lemma~\ref{lemma.auto} for details). We term them {\em degree-counting extensions of $\sigma$} of {\em coarseness} $\mu$. 

In the main Section~\ref{sec.skew} of the present paper we construct a wide class of skew derivations  (twisted by degree-counting extensions of $\sigma \in \mathrm{Aut}(R)$) on degree-one generalized Weyl algebras $R(a,\varphi)$. Each element in this class is determined by the datum comprising a system of skew derivations of $R$ and a pair of elements of $R$, all of which are required to satisfy a set of natural conditions (see Theorem~\ref{thm.Weyl.deriv}). Individually, each assignment of a skew derivation on $R(a,\varphi)$ to a skew derivation on $R$ defines an injective map of twisted degree-one Hochschild cohomology groups. We show also that our construction affords one a full classification of skew derivations which send $R$ to a positive (respectively, negative) part of $R(a,\varphi)$ (the positivity or negative is defined with respect to a natural $\ZZ$-grading) and vanish on one of the extending generators of $R(a,\varphi)$. Next we determine which of the constructed skew derivations are $Q$-skew derivations and we also derive the sufficient and necessary conditions for compatibility of skew derivations with the natural $\ZZ$-grading of $R(\varphi; a)$ as maps of a fixed degree. Departing from the general case, we focus on algebras associated to automorphisms $\varphi$ of finite order, and  construct additional classes of skew derivations on them. 

Keeping in mind that skew derivations can be used to construct first-order differential calculi, provided they satisfy particular {\em orthogonality conditions} (see Section~\ref{sec.prel} for explanation), in Section~\ref{sec.ortho} we derive 
sufficient conditions for the orthogonality of pairs of skew derivations constructed in Theorem~\ref{thm.Weyl.deriv}. The bulk of these conditions involves the pairwise co-primeness  of $a$  with $\varphi^i(a)$, which incidentally is crucial for the statement of the Kashiwara theorem for generalized Weyl algebras \cite[2.2~Theorem]{LunRos:Kas}. In this way some of the results of \cite{Brz:dif}, where orthogonal systems of skew derivations were studied for generalized Weyl algebras over a polynomial ring in one variable, can be reproduced as special cases of a far more general situation.

In the final Section~\ref{sec.disc} we focus on generalized Weyl algebras over the polynomial ring in one indeterminate $h$ with coefficients from a field $\KK$, and with a linear polynomial as the central element. The automorphism $\varphi$ is chosen to be the map rescaling $h$ by a non-zero scalar $q\in \KK$. We classify all skew derivations twisted by a degree-counting extension of the identity automorphism of coarseness equal to $q$ and construct orthogonal pairs of skew derivations on the quantum disc algebra, i.e.\ the generalized Weyl algebra over $\KK[h]$ given by the central element $a=1-h$ and the automorphism $h\mapsto qh$.

\section{Preliminaries}\label{sec.prel}\setcounter{equation}{0}

Given an associative, unital ring $R$, a ring automorphism $\varphi: R\to R$ and an element $a$ of the centre of $R$, the associated {\em degree-one generalized Weyl algebra} $R(a,\varphi)$ is defined as the quotient of the free polynomial ring  $R\langle x, y\rangle$ by the relations:
 \begin{equation}\label{gW}
 xy = \varphi(a), \quad yx = a, \quad xr = \varphi(r)x, \quad yr = \varphi^{-1}(r)y, 
 \end{equation}
for all $r\in R$. Every element of $R(a,\varphi)$ can be uniquely written as $r+ \sum_{k>0} r_k x^k + \sum_{l>0} s_l y^l$, where $r, r_k,s_l\in R$. In the sequel, by a generalized Weyl algebra we always mean a degree-one generalized Weyl algebra. 

If $R$ is a $\ZZ$-graded algebra, then $R(a,\varphi)$ can also be made into a $\ZZ$-graded algebra provided that $\varphi$ is a degree-preserving automorphism and $a$ is a homogenous element. Specifically, if the degree of $a$ is $d$, then $x$ can be set to have, say, a positive degree $m$ and $y$ to have degree $d-m$. We refer to this grading of $R(a,\varphi)$ as the {\em $(d,m)$-type grading}. In particular if $R$ is concentrated in the degree zero (or, simply, not graded), then we set 
$$
R(a,\varphi)_0 = R, \quad \! R(a,\varphi)_+ = \{\sum_{m=1} r_m x^m \; |\; r_m\in R\}, \quad \! R(a,\varphi)_- = \{\sum_{m=1} r_m y^m \; |\; r_m\in R\},
$$
so that
$$
R(a,\varphi) = R(a,\varphi)_- \oplus  R(a,\varphi)_0 \oplus R(a,\varphi)_+. 
$$
We refer to $R(a,\varphi)_+$ (respectively, $R(a,\varphi)_-$) as to the {\em positive} (respectively, {\em negative}) part of $R(a,\varphi)$.

Although the definition of $R(a,\varphi)$ is not invariant under the exchange of generators $x$ and $y$, one easily checks that the following map
\begin{equation}\label{x-y}
\Psi: R(a,\varphi)\to R(\varphi(a),\varphi^{-1}), \quad x\mapsto y, \quad y\mapsto x,\quad \Psi|_R = \id_R,
\end{equation}
(where we denote the generators of two generalized Weyl algebras by the same letters) is an isomorphism of algebras; see \cite[2.7~Lemma~(i)]{BavJor:iso}. We refer to this isomorphism as to the {\em $x$-$y$ symmetry} (it is called a {\em Fourier transform} in \cite{LunRos:Kas}). This symmetry allows one to deduce counterparts of various statements, without any additional effort.

For any ring $A$, a {\em (right) skew derivation} is a pair $(\partial, \sigma)$ consisting of a ring automorphism $\sigma: A\to A$ and an additive map $\partial: A\to A$ that satisfies the $\sigma$-twisted Leibniz rule, for all $a,b\in A$,
\begin{equation}\label{Leibniz}
\partial(ab) = \partial(a)\sigma(b) + a\partial(b).
\end{equation} 
Clearly, if $(\partial_1, \sigma)$ and $(\partial_2, \sigma)$ are skew derivations, then so  are $(\partial_1+\partial_2, \sigma)$ and $(-\partial_i,\sigma)$. Obviously, $(0,\sigma)$ is a skew derivation. Hence the set $\Der\sigma A$ of all skew derivations $(\partial,\sigma)$ of $A$ with a fixed $\sigma$ is an abelian group. 

To any element $b\in A$ one can associate the corresponding {\em inner} skew derivation $(\partial_b, \sigma)$ given by the $\sigma$-twisted commutator with $b$, i.e., for all $a\in A$,
$$
\partial_b(a) = b\sigma(a) - ab.
$$
The assignment $b\mapsto (\partial_b,\sigma)$  defines an additive map 
\begin{equation}\label{Delta}
\Delta: A\to \Der\sigma A.
\end{equation}

A skew derivation $(\partial, \sigma)$ is called a {\em skew $Q$-derivation}, if there exists a central unit $Q \in A$, invariant under $\sigma$ and such that
\begin{equation}\label{q-derivation}
\sigma\circ\partial\circ\sigma^{-1} = Q\, \partial.
\end{equation}
Any inner skew derivation $(\partial_b,\sigma)$ is a skew 1-derivation.

Given a ring automorphism $\sigma$ of $A$ and an $A$-bimodule $M$, we write $M_\sigma$ for the $A$-bimodule with right $A$-action twisted by $\sigma$, i.e.\ defined by
$$
m\cdot a := m\sigma(a), \qquad \mbox{for all $a\in A$, $m\in M$}.
$$
With this notation a pair $(\partial, \sigma)$ is a skew derivation on $A$ if and only if $\partial$ is an $A_\sigma$-valued derivation of $A$. Again, for an $A$-bimodule $M$ we denote by $M^A$ the abelian group
$$
M^A:= \{ m\in M\; |\; \forall a\in A,\, am=ma\}.
$$
Obviously $M^A$ is a module over the centre $Z(A) = A^A$ of $A$.

Recall that, for a ring $A$ and an $A$-bimodule $M$, the $M$-valued Hochschild cohomology of $A$, $HH(A,M)$, is the cohomology of the complex
$\bbb : HC(A,M)^n \to HC(A,M)^{n+1},
$
where
$
HC(A,M)^n = \hom {} {A^{\ot n}} M,
$
the group of additive homomorphisms from $A^{\ot n}$ to $M$, and
\begin{eqnarray}\label{hoch}
(\bbb f)(a_0 \ot \cdots \ot a_{n}) = a_0 f(a_1 \ot \cdots \ot a_n) +&&\!\!\!\!\!\!\!\! \sum_{k=1}^n (-1)^nf(a_0\ot \cdots \ot a_{k-1}a_k \ot \cdots \ot a_n) \nonumber \\
&& + (-1)^{n+1} f(a_0 \ot \cdots \ot a_{n-1})a_n.
\end{eqnarray}
In particular, $HH^0(A,M) = M^A$ and $HC^1(A,M)$ is the group of $M$-valued derivations on $A$, while the image of $\bbb : HC^0(A,M) \to HC^1(A,M)$ consists of all inner derivations. Thus, the kernel of the map $\Delta$ \eqref{Delta} is simply equal to $HH^1(A,A_\sigma)$.

The complex $(HC(A,A_\sigma), \bbb)$, where $\sigma$ is an algebra automorphism of $A$, contains a subcomplex, which will play a special role in the discussion of skew derivations on generalized Weyl algebras. Let $\varphi$ be an automorphism of $A$ commuting with $\sigma$, and let $\mu$ be an element of the centre of $A$. Set
$$
HC_{\sigma;\mu,\varphi}^n(A) := \{ f\in \hom {}{A^n} A \; |\; \varphi^{-1}\circ f\circ \varphi^{\ot n} = \mu\, f\} .
$$
Then the Hochschild coboundary $\bbb$ \eqref{hoch} for $M = A_\sigma$ restricts to $HC_{\sigma;\mu,\varphi}(A)$. The cohomology of the resulting complex is denoted by $HH_{\sigma;\, \mu,\varphi}(A)$, and we refer to it as a {\em doubly twisted Hochschild cohomology of $A$}. In case $\mu =1$, $\varphi =\id$ this is the standard twisted Hochschild cohomology of $A$, denoted by $HH_{\sigma}(A)$

Let $(\partial_i, \sigma_i)_{i=1}^n$ be a (finite) family of skew derivations of a ring $A$. We say that it forms an {\em orthogonal system of skew derivations} provided there exist two finite sets $\{a_{i\,t}\}, \{b_{i\,t}\}\subset A$  such that, 
\begin{equation}\label{ortho}
\sum_t a_{i\,t}\partial_k(b_{i\,t}) = \delta_{ik}, \quad \mbox{for all}\quad  i,k =1, \ldots, n.
\end{equation}
Note that this is equivalent to to the existence of  three finite sets $\{a_{i\,t}\}, \{b_{i\,t}\}, \{c_{i\,t}\}$ of elements of $A$ such that
\begin{equation}\label{ortho.x}
\sum_t a_{i\,t}\partial_k(b_{i\,t}) \sigma_k\left(\sigma_i^{-1}\left(c_{i\,t}\right)\right) = \delta_{ik}, \quad \mbox{for all}\quad  i,k =1, \ldots, n.
\end{equation}
Indeed, obviously \eqref{ortho.x} implies \eqref{ortho}. On the other hand, if \eqref{ortho.x} holds, then the twisted Leibniz rules yield
$$
\sum_t  a_{i\,t}\partial_k\left(b_{i\,t}\sigma_i^{-1}\left(c_{i\,t}\right)\right) - \sum_t  a_{i\,t}b_{i\,t}\partial_k\left(\sigma_i^{-1}\left(c_{i\,t}\right)\right) = \sum_t a_{i\,t}\partial_k(b_{i\,t}) \sigma_k\left(\sigma_i^{-1}\left(c_{i\,t}\right)\right) = \delta_{ik},
$$
hence $\{ a_{i\,t}, -a_{i\,t}b_{i\,t}\}, \{b_{i\,t}\sigma_i^{-1}\left(c_{i\,t}\right),\sigma_i^{-1}\left(c_{i\,t}\right)\}$ are the required two sets.

As explained for example in \cite[Section~3]{BrzElK:int} orthogonal systems of skew derivations on $A$ can be used to form first order differential calculi on $A$. By the latter we mean a pair consisting of an $A$-bimodule $\Omega$ and an $\Omega$-valued derivation $d: A\to \Omega$, such that $\Omega = Ad(A)$. Given an orthogonal system of skew derivations $(\partial_i, \sigma_i)_{i=1}^n$, $\Omega$ and $d$ are defined by,
\begin{equation}\label{1oc}
\Omega = \bigoplus_{i =1}^n A_{\sigma_i}, \qquad d: a\mapsto \left(\partial_i(a)\right)_{i=1}^n.
\end{equation}
While the $\sigma$-twisted Leibniz rule ensures that the map $d$ in \eqref{1oc} is a derivation, the orthogonality conditions \eqref{ortho} are equivalent to the density of $\Omega$: $\Omega = Ad(A)$. 

In this paper we investigate skew derivations of generalized Weyl algebras $R(a,\varphi)$ related to a particular class of automorphisms of $R(a,\varphi)$ (compare \cite[2.7~Lemma~(iii)]{BavJor:iso}).
\begin{lemma}\label{lemma.auto}
Given $R(a,\varphi)$, let $\sigma$ be a ring automorphism of $R$ such that 
\begin{equation}\label{sigma.phi}
\sigma \circ \varphi = \varphi\circ\sigma, \qquad \sigma(a) =a.
\end{equation}
Then, for any central unit $\mu$ in $R$, the map $\sigma$ extends to the automorphism $\sigma_\mu$ of $R(a,\varphi)$ by
\begin{equation}\label{sigma.mu}
\sigma_\mu(x) = \mu^{-1} x , \qquad \sigma_\mu(y) =y\,\mu = \varphi^{-1}(\mu) y.
\end{equation}
\end{lemma}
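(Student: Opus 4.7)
The plan is to use the universal property of $R(a,\varphi)$ as a ring given by generators and relations: namely, $R(a,\varphi)$ is the quotient of the free product of $R$ and $\ZZ\langle x,y\rangle$ modulo the relations \eqref{gW}. Accordingly, to verify that $\sigma_\mu$ extends to a well-defined ring homomorphism of $R(a,\varphi)$, it suffices to check that the prescribed images $\sigma(r)$ for $r\in R$, together with $\mu^{-1}x$ and $\varphi^{-1}(\mu)y$, satisfy the four relations in \eqref{gW}. Afterwards, one checks bijectivity by exhibiting an inverse of the same form.

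The first step is to verify the consistency $\sigma_\mu(y) = y\mu = \varphi^{-1}(\mu)y$: this is immediate from applying the fourth relation of \eqref{gW} to the central element $\mu\in R$. Next, I would plug the proposed values into each of the four relations in turn. For $xy=\varphi(a)$ one computes
\[
\sigma_\mu(x)\sigma_\mu(y) = \mu^{-1}x\varphi^{-1}(\mu)y = \mu^{-1}\varphi(\varphi^{-1}(\mu))xy = \varphi(a),
\]
where centrality of $\mu$ is used to pull $\mu^{-1}$ past $\varphi(\varphi^{-1}(\mu))=\mu$, and the right-hand side coincides with $\sigma_\mu(\varphi(a)) = \sigma(\varphi(a)) = \varphi(\sigma(a)) = \varphi(a)$ by the compatibility \eqref{sigma.phi}. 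The relation $yx=a$ is verified analogously, this time producing $\varphi^{-1}(\mu)\varphi^{-1}(\mu^{-1}) = 1$ in front of $yx=a$, and using $\sigma(a)=a$. For $xr = \varphi(r)x$, one computes
\[
\sigma_\mu(x)\sigma_\mu(r) = \mu^{-1}x\sigma(r) = \mu^{-1}\varphi(\sigma(r))x = \sigma(\varphi(r))\mu^{-1}x = \sigma_\mu(\varphi(r)x),
\]
using $\sigma\circ\varphi=\varphi\circ\sigma$ and the centrality of $\mu^{-1}$; the relation $yr=\varphi^{-1}(r)y$ is treated in the same spirit, where one also needs $\sigma\circ\varphi^{-1}=\varphi^{-1}\circ\sigma$ (a formal consequence of \eqref{sigma.phi}) and the fact that $\mu$ is central so that $\mu\sigma(r)=\sigma(r)\mu$.

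For the bijectivity, I would apply the construction just proven to the triple $(\sigma^{-1},\varphi,\sigma^{-1}(\mu^{-1}))$: since $\sigma^{-1}$ still commutes with $\varphi$, fixes $a$, and $\sigma^{-1}(\mu^{-1})$ is still a central unit in $R$, the first part of the proof provides a ring endomorphism $\tau$ of $R(a,\varphi)$ with $\tau|_R=\sigma^{-1}$, $\tau(x)=\sigma^{-1}(\mu)\,x$, $\tau(y)=\varphi^{-1}(\sigma^{-1}(\mu^{-1}))\,y$. A direct computation on the three types of generators, $r\in R$, $x$, and $y$, then shows $\sigma_\mu\circ\tau=\tau\circ\sigma_\mu=\id$, so $\tau=\sigma_\mu^{-1}$ and $\sigma_\mu$ is an automorphism.

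The main (modest) obstacle is keeping careful track of the interplay between three symmetries at once---the commutation of $\sigma$ with $\varphi$, the centrality of $\mu$, and the twists $xr=\varphi(r)x$, $yr=\varphi^{-1}(r)y$---so that the $\mu$-factors produced by $\sigma_\mu(x)$ and $\sigma_\mu(y)$ cancel precisely against the $\varphi$-twisted movement of $\mu$ across $x$ and $y$. No deeper ingredient is required.
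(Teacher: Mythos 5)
Your proposal is correct and takes essentially the same approach as the paper, whose entire proof is the one-line remark that one easily verifies compatibility of $\sigma_\mu$ with the relations \eqref{gW}; you have simply written out that verification in full. The additional inverse construction via $(\sigma^{-1},\varphi,\sigma^{-1}(\mu^{-1}))$ correctly supplies the bijectivity argument that the paper leaves implicit.
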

\begin{proof}
One easily verifies that $\sigma_\mu$ is compatible with relations \eqref{gW}.
\end{proof}

Thinking about $R(a,\varphi)$ as a $\ZZ$-graded algebra we feel justified in making the following
\begin{definition}\label{def.degree}
An automorphism $\sigma_\mu$ described in Lemma~\ref{lemma.auto} is called a {\em degree-counting extension} of the automorphism $\sigma$ of $R$ (of {\em coarseness} $\mu$).
\end{definition}

\section{Skew derivations on generalized Weyl algebras}\label{sec.skew}\setcounter{equation}{0}
In this section first we describe a wide class of skew derivations on a generalized Weyl-algebra $R(a,\varphi)$, twisted by the degree-counting extension of an automorphism $\sigma$ of $R$ of a general coarseness $\mu$. Next we determine which of the constructed derivations are $Q$-skew derivations. Finally, we construct additional skew derivations, when the automorphism $\varphi$ has a finite order.

\begin{theorem}\label{thm.Weyl.deriv}
Let $R(a,\varphi)$ be a generalized Weyl algebra and let $\sigma$ be an automorphism of $R$ commuting with $\varphi$ and fixing $a$. Let $\sigma_\mu$ be the degree-counting extension of $\sigma$ of coarseness $\mu$. Let
$$
(\alpha_i ,\ \varphi^i\circ \sigma)_{i=-N}^M, 
$$
be skew derivations on $R$ such that, for all $i=-N,\ldots , M$,
\begin{equation}\label{q-deriv}
\alpha_i\circ \varphi= \varphi^{i}(\mu) \varphi \circ \alpha_i  , 
\qquad a\mid \alpha_0(a), \qquad \frac{\alpha_0(a)}a \in R_\sigma^R.
\end{equation}
For all $c\in R_\sigma^R$, $b\in R$, set 
\begin{subequations}\label{delta}
\begin{equation}\label{delta.r}
\partial(r) = \sum_{m=0}^M\alpha_m(r)\, x^m +\sum_{n=1}^N\alpha_{-n}(r)\,  y^n +b\sigma(r) - rb, \qquad \mbox{for all $r\in R$},
\end{equation}
\begin{equation}\label{delta.x}
\partial(x) = \left(c - \varphi(b) + \mu^{-1} b\right) x + \sum_{n=1}^N \varphi \left(\alpha_{-n}\left(a\right)\right) y^{n-1},
\end{equation}
\begin{eqnarray}\label{delta.y}
\partial(y) &=& \left(\frac{\alpha_0(a)}a - \varphi^{-1}\left( c + \mu^{-1} b\right) +b \right) y\, \mu  + \sum_{m=1}^M \alpha_m(a)\, x^{m-1} \mu  \\
&=& \left(\frac{\alpha_0(a)}a - \varphi^{-1}\left( c + \mu^{-1} b\right) +b \right)\varphi^{-1}(\mu) y  + \sum_{m=1}^M \alpha_m(a)\, \varphi^{m-1}(\mu)x^{m-1}. \nonumber
\end{eqnarray}
\end{subequations}
Then $\partial$ extends to a skew derivation $(\partial, \sigma_\mu)$ on $R(a,\varphi)$.
\end{theorem}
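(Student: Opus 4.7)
The strategy is to present $R(a,\varphi)$ as the quotient of the free $R$-ring $R\langle x,y\rangle$ by the four families of relations \eqref{gW}. I would define $\partial$ on $R$, $x$, $y$ by the formulas \eqref{delta}, and then verify (i) that $\partial|_R$ is itself a $\sigma_\mu$-twisted derivation of $R$ with values in $R(a,\varphi)$, and (ii) that the assignments of $\partial(x)$, $\partial(y)$ are compatible with the four defining relations in \eqref{gW}. Once both are established, $\partial$ extends uniquely from generators to a $\sigma_\mu$-twisted derivation of $R(a,\varphi)$ by the Leibniz rule \eqref{Leibniz}.

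For (i), I would split $\partial|_R$ into the additive pieces $D_i\colon r\mapsto \alpha_i(r)\,z_i$, where $z_m=x^m$ for $m\geq 0$ and $z_{-n}=y^n$ for $n>0$, together with the inner part $\Delta_b\colon r\mapsto b\sigma(r)-rb$. Using the commutations $x^m\sigma(r) = \varphi^m(\sigma(r))x^m$ and $y^n\sigma(r) = \varphi^{-n}(\sigma(r))y^n$, which follow from \eqref{gW} and the hypothesis that $\sigma$ commutes with $\varphi$, the $(\varphi^i\circ\sigma)$-twisted Leibniz rule for $\alpha_i$ translates directly into the $\sigma_\mu$-twisted Leibniz rule for $D_i$ (recall $\sigma_\mu|_R=\sigma$). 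The inner term $\Delta_b$ is automatically a $\sigma$-twisted derivation.

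For (ii), I would verify the four relations separately. The relations $xr = \varphi(r)x$ and $yr = \varphi^{-1}(r)y$, upon applying $\partial$ together with \eqref{sigma.mu}, require certain shifts between $\alpha_i\circ\varphi$ and $\varphi\circ\alpha_i$; this is exactly the intertwining identity $\alpha_i\circ\varphi = \varphi^i(\mu)\,\varphi\circ\alpha_i$ in \eqref{q-deriv}, while the explicit $b$- and $\mu^{-1}b$-terms in \eqref{delta.x} and \eqref{delta.y} are designed to cancel the boundary contributions coming from $\Delta_b$ when pushed past $x$ and $y$.

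The main obstacle, and the calculation I would carry out most carefully, is the verification of $\partial(xy)=\partial(\varphi(a))$ and $\partial(yx)=\partial(a)$. Since $a$ (and hence $\varphi(a)$) is central in $R$ and fixed by $\sigma$, the inner piece $\Delta_b$ drops out of both $\partial(a)$ and $\partial(\varphi(a))$, leaving only the $\alpha_i$-sums. On the other hand, $\partial(x)\sigma_\mu(y)+x\partial(y)$ and $\partial(y)\sigma_\mu(x)+y\partial(x)$ produce, after using $xy=\varphi(a)$ and $yx=a$, the required sums $\sum_i \alpha_i(a)z_i$ and $\sum_i\alpha_i(\varphi(a))z_i$ respectively, plus additional constant-degree contributions involving $c$, $b$, $\mu$ and $\alpha_0(a)/a$. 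The purpose of the conditions $a\mid\alpha_0(a)$ and $\alpha_0(a)/a\in R^R_\sigma$ from \eqref{q-deriv} is precisely to make $\alpha_0(a)/a$ well-defined in $R$ and to ensure that the coefficient of $y$ (respectively $x$) in $\partial(y)$ (respectively $\partial(x)$) commutes past $x$ and $y$ without spurious terms; the centrality of $c$ plays the analogous role on the opposite side. Matching coefficients of $x^k$ and $y^k$ and then invoking $\alpha_i(\varphi(a))=\varphi^i(\mu)\varphi(\alpha_i(a))$ to identify the positive and negative shifted sums would complete the verification.
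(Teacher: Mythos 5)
Your proposal is correct and follows essentially the same route as the paper: decompose $\partial$ into the weight-$i$ pieces $r\mapsto\alpha_i(r)z_i$ plus the inner part, use that skew derivations with a fixed twist form an abelian group, and check each piece against the four relations \eqref{gW}, with the hypotheses of \eqref{q-deriv} entering exactly where you say they do. The only cosmetic difference is that the paper handles the negative-weight pieces by pulling back through the $x$-$y$ symmetry rather than by a direct computation.
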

\begin{proof}
Since, for a fixed automorphism, skew derivations form an abelian group, to prove the theorem we only need to check that the following four (classes of) maps defined by 
\begin{subequations}\label{monomials}
\begin{equation}\label{monomial.0}
\partial_0(r) = \alpha_0 (r), \qquad \partial_0(x) = cx, \qquad \partial_0(y) =  \left(\frac{\alpha_0(a)}a - \varphi^{-1}\left( c\right)\right) y\,  \mu,
\end{equation}
\begin{eqnarray}\label{monomial.m}
\partial_m(r) = \alpha_m (r) x^m, \quad \partial_m(x) = 0, \quad \partial_m(y) \!\!\!&=& \!\!\!  \alpha_m(a)\, x^{m-1}\mu \nonumber\\
&=& \!\!\! \varphi^{-1}\left(\alpha_m\left(\varphi\left(a\right)\right)\right) x^{m-1},
\end{eqnarray}
\begin{equation}\label{monomial.-n}
\partial_{-n}(r) = \alpha_{-n} (r) y^n, \qquad \partial_{-n}(x) = \varphi\left(\alpha_{-n}\left(a\right)\right) y^{n-1}, \qquad \partial_{-n}(y) = 0,
\end{equation}
\begin{eqnarray}\label{monomial.00}
\bar\partial_b(r) = b\sigma(r) - rb, \quad \bar\partial_b(x) =  \left(\mu^{-1} b - \varphi(b) \right) x, \quad \bar\partial_b(y) \!\!\!&=& \!\!\! \varphi^{-1} \left( \mu\,\varphi\left( b\right) - b\right)  y \nonumber\\
& = &\!\!\! y\, \left( \mu\,\varphi\left( b\right) - b\right),
\end{eqnarray}
\end{subequations}
 where $m,n$ are positive integers, extend to the elements of $\Der{\sigma_\mu}{R(a,\varphi)}$. The second equality in the definition of $\partial_m(y)$ \eqref{monomial.m} follows by the commutation rules in a generalized Weyl algebra and by  \eqref{q-deriv}.

Since $(\alpha_0,\sigma)$ is a skew derivation on $R$, and $\sigma_\mu$ restricted to $R$ is equal to $\sigma$, $\partial_0$ restricted to $R$ satisfies the $\sigma_\mu$-twisted Leibniz rule.  We need to check that $\partial_0$ extended to the whole of $R(a,\varphi)$ by the $\sigma_\mu$-twisted Leibniz rule preserves all the relations \eqref{gW}. First, let us compute
\begin{eqnarray*}
\partial_0(xy - \varphi(a)) &=&  c x  y\mu + x  \left(\frac{\alpha_0(a)}a - \varphi^{-1}\left( c\right)\right) y \mu- \alpha_0\left(\varphi(a)\right) \\
&=& \mu c \varphi(a) + \mu \left(\varphi\left(\frac{\alpha_0(a)}a\right) - c\right) \varphi(a) - \mu \varphi(\alpha_0(a)) =0,
\end{eqnarray*}
where the first equality follows by the definition of $\partial_0$ and the $\sigma_\mu$-twisted Leibniz rule, while the second one follows by relations \eqref{gW}, the first of \eqref{q-deriv}, and the centrality of $\mu$. 
Next:
\begin{eqnarray*}
\partial_0(yx - a) &=& \left(\frac{\alpha_0(a)}a - \varphi^{-1}\left( c\right)\right) y \mu \mu^{-1} x + y c x  - \alpha_0(a) \\ 
&=&  \left(\frac{\alpha_0(a)}a - \varphi^{-1}\left( c\right)\right) a + \varphi^{-1}(c) a  - \alpha_0(a) =0,
\end{eqnarray*}
where the first equality follows by the definition of $\partial_0$ via the twisted Leibniz rule, and the second one by \eqref{gW} and the centrality of $\mu$. Next, for all $r\in R$,
\begin{eqnarray*}
\partial_0(xr - \varphi(r)x) &=& cx \sigma(r) + x\alpha_0(r) - \alpha_0 (\varphi(r)) \mu^{-1}  x - \varphi(r) c x \\
&=& \left(c\varphi( \sigma(r))  + \varphi(\alpha_0(r)) -  \varphi(\alpha_0 (r)) - c \sigma(\varphi(r))\right) x =0.
\end{eqnarray*}
Here, as before, the first equality follows by the definition of $\partial_0$ and the $\sigma_\mu$-twisted Leibniz rule, the second one follows by the first of conditions \eqref{q-deriv}, centrality of $\mu$ and by the fact that $c\in R_\sigma^R$. The final equality is a consequence of \eqref{sigma.phi}. Finally, for all $r\in R$,
\begin{eqnarray*}
\partial_0(yr - \varphi^{-1}(r)y) &=& \left(\frac{\alpha_0(a)}a - \varphi^{-1}\left( c\right)\right) y  \mu \sigma(r) + y\alpha_0(r)\\
&&  - \alpha_0 (\varphi^{-1}(r))   y \mu - \varphi^{-1}(r) \left(\frac{\alpha_0(a)}a - \varphi^{-1}\left( c\right)\right) y \mu  \\
&=& \left(\frac{\alpha_0(a)}a - \varphi^{-1}\left( c\right)\right)  \varphi^{-1}(\sigma(r))y  \mu + \varphi^{-1}(\alpha_0(r)) y \\
&&  - \varphi^{-1}(\alpha_0(r)) y -  \varphi^{-1}(r) \left(\frac{\alpha_0(a)}a - \varphi^{-1}\left( c\right)\right) y \mu \\
&=&  \frac{\alpha_0(a)}a\sigma\left(\varphi^{-1}(r)\right)y\mu - \varphi^{-1}(r) \frac{\alpha_0(a)}a y\mu  =0.
\end{eqnarray*}
Again, the first equality follows by the definition of $\partial_0$ and the $\sigma_\mu$-twisted Leibniz rule. The second equality is a consequence of \eqref{gW}, the first of conditions \eqref{q-deriv} and the centrality of $\mu$, while the third one follows by \eqref{sigma.phi} and the fact that $c\in R_\sigma^R$. Since also $\frac{\alpha_0(a)}a \in R_\sigma^R$, the final equality is obtained. Thus, $\partial_0$ vanishes on all generators of the ideal in $R\langle x,y\rangle$ that defines $R(a,\varphi)$, hence $\partial_0$ extends as a $\sigma_\mu$-twisted derivation to the whole of $R(a,\varphi)$.

To prove that $(\partial_m, \sigma_\mu)$ is a skew derivation we set,
$$
\partial_m(r) = \alpha_m (r) x^m, \qquad \partial_m(x) = 0, \qquad \partial_m(y) =  c_m x^{m-1},
$$
where $\alpha_m$ is an additive automorphism of $R$ and $c_m\in R$, and derive what conditions these need to satisfy. First, for all $r,s\in R$,
$$
\partial_m(rs) = \partial_m(r)\sigma(s) + r\partial_m(s),
$$
hence, by \eqref{gW}, 
$$
\alpha_m(rs) x^m = \alpha_m(r) x^m \sigma(s) + r\alpha_m(s)x^m = \left(  \alpha_m(r)\varphi^m(\sigma(s)) + r\alpha_m(s)\right)x^m,
$$
which is equivalent to the fact that $(\alpha_m, \varphi^m\circ \sigma)$ is a skew derivation of $R$. Next, for all $r\in R$,
$$
\partial_m(xr - \varphi(r)x) = x\alpha_m(r)x^m - \alpha_m(\varphi(r)) x^m \mu^{-1}\, x = \left(\varphi(\alpha_m (r)) - \varphi^m(\mu^{-1} ) \alpha_m(\varphi(r))\right)x^{m+1},
$$
by \eqref{gW} and the centrality of $\mu$. This yields necessarily the first of conditions \eqref{q-deriv}.
 Furthermore,
$$
\partial_m(yx - a) = c_m x^{m-1} \mu^{-1}\, x - \alpha_m(a) x^m = \left( \varphi^{m-1}\left(\mu^{-1}\right) c_m - \alpha_m(a)\right)x^m,
$$
by \eqref{gW} and the centrality of $\mu$ . This fixes $c_m$,
\begin{equation}\label{fix.c_m}
c_m = \varphi^{m-1}\left(\mu\right)\alpha_m(a), \quad {\mbox i.e.}\quad \partial_m(y) =  \alpha_m(a)\, x^{m-1}\mu .
\end{equation}
With this at hand we can compute, for all $r\in R$,
\begin{eqnarray*}
\partial_m(yr)&=&\alpha_m(a)\, x^{m-1}\mu \sigma(r) + y \alpha_m(r) x^m\\
&=&\alpha_m(a) \varphi^{m-1}(\sigma(r)) x^{m-1} \mu + \varphi^{-1}(\alpha_m(r)) a x^{m-1}\\
&=& \alpha_m(a) \varphi^{m-1}(\sigma(r)) x^{m-1} \mu + \varphi^{m-1}(\mu) \alpha_m(\varphi^{-1}(r)) ax^{m-1}\\
&=&\left( \alpha_m(a) \varphi^m\left( \sigma( \varphi^{-1}(r)))+a \alpha_m(\varphi^{-1}(r)\right) \right)x^{m-1}\mu
=\alpha_m\left(a\varphi^{-1}(r)\right)x^{m-1}\mu,
\end{eqnarray*}
where the first equality follows by the definition of $\partial_m $ through equation \eqref{fix.c_m}. The second and the third equalities follow by \eqref{gW},  the centrality of $\mu$ and \eqref{q-deriv}, while the third one holds since $\alpha_m$ is a $\varphi^m\circ\sigma$-skew derivation. On the other hand, using \eqref{fix.c_m}, \eqref{gW}, and that $\sigma$ fixes central $a$, and that $\alpha_m$ is a $\varphi^m\circ\sigma$-skew derivation we compute
\begin{eqnarray*}
\partial_m\left(\varphi^{-1}(r)y\right)&=&\alpha_m(\varphi^{-1}(r)) x^m  y\, \mu + \varphi^{-1}(r) \alpha_m(a)\, x^{m-1}\mu \\
&=&\left(\alpha_m(\varphi^{-1}(r)) \varphi^{m}(a) x^{m-1} + \varphi^{-1}(r) \alpha_m(a) x^{m-1}\right)\mu\\
& = &\alpha_m\left(a\varphi^{-1}(r)\right)x^{m-1}\mu.
\end{eqnarray*}
Therefore $\partial_m(yr - \varphi^{-1}(r)y) =0$, as required.
Finally,
\begin{eqnarray*}
\partial_m(xy - \varphi(a))&=& x \alpha_m(a)\, x^{m-1}\mu - \alpha_m(\varphi(a)) x^m\\
&=&  \left( \varphi^{m}(\mu)\varphi(\alpha_m(a)) - \alpha_m(\varphi(a))\right)x^m =0,
\end{eqnarray*}
by \eqref{fix.c_m}, \eqref{gW} and \eqref{q-deriv}.
Thus, the $\sigma_{\mu}$-skew derivation property of $\partial_m$ is compatible with relations \eqref{gW}.

Pulling the skew derivation $(\partial_{n}, \sigma_{\varphi^{-1}(\mu^{-1}))})$ in $R(\varphi(a), \varphi^{-1})$ back to $R(a,\varphi)$ through the $x$-$y$ symmetry one concludes  that $(\partial_{-n}, \sigma_\mu)$ \eqref{monomial.-n} is a skew derivation in $R(a,\varphi)$.
 
 Since $\bar\partial_b$ is given on $R$ by a $\sigma$-twisted commutator with $b$,  $(\bar\partial_b\!\!\mid_R ,\sigma)$ is a skew derivation. Note further that since $\sigma(a)=a$ and $a$ is a central element, $\bar\partial_b(a) = \bar\partial_b(\varphi(a)) =0$. Let us write $s = \left( \mu^{-1} b - \varphi(b) \right) $, so that $\bar\partial_b(x) = sx$, observe that $\bar\partial_b(y) = -\varphi^{-1}(\mu s) y$, and compute
$$
\bar\partial_b(xy) = s x \varphi^{-1}(\mu) y - x \varphi^{-1}(\mu s) y =0, \qquad \bar\partial_b(yx) = -\varphi^{-1}(\mu s) y \mu^{-1} x  + y sx =0,
$$
by \eqref{gW}, as required. Next, for all $r\in R$,
\begin{eqnarray*}
\bar\partial_b(xr - \varphi(r)x) &=&(s\varphi(\sigma(r))+\varphi\left(b\sigma(r)-rb\right)-\mu^{-1}\left(b\sigma(\varphi(r))-\varphi(r)b\right)-\varphi(r)s)x \\
&=& \left(s \varphi\left(\sigma(r)\right)+\left(\varphi(b)-\mu^{-1}b\right)\varphi\left(\sigma(r)\right)-\varphi(r)\left(\varphi(b)- \mu^{-1}b\right)-\varphi(r)s\right)x\\
&=& 0.
\end{eqnarray*}
 
 Finally, 
 \begin{eqnarray*}
\bar\partial_b(yr - \varphi^{-1}(r)y) &=& (-\varphi^{-1}(\mu s)\varphi^{-1}\left(\sigma(r)\right)+\varphi^{-1}\left(b\sigma(r) - rb\right)- b\sigma\left(\varphi^{-1}(r)\right)\varphi^{-1}(\mu)\\
&& +\varphi^{-1}(r)b \varphi^{-1}(\mu)+\varphi^{-1}(r)\varphi^{-1}(\mu s))y\\
&=& (-\varphi^{-1}(\mu s)\varphi^{-1}\left(\sigma(r)\right)+(\varphi^{-1}(b)- b \varphi^{-1}(\mu))\varphi^{-1}\left(\sigma(r)\right)-\\
&& \varphi^{-1}(r)(\varphi^{-1}(b)-b \varphi^{-1}(\mu))+\varphi^{-1}(r)\varphi^{-1}(\mu s))y
= 0,
\end{eqnarray*}
by using \eqref{gW} and \eqref{sigma.phi}.
This completes the proof  of the theorem. 
\end{proof}

\begin{remark}\label{rem.regular}
Note that the existence of a regular element of $R_\sigma ^R$ implies that, for all $z$ in the centre of $R$, $\sigma(z) =z$.
\end{remark}

\begin{remark}\label{rem.fixed}
Since the automorphism $\sigma$ commutes with $\varphi$ and $\sigma(a) =a$, the generalized Weyl algebra $R(a,\varphi)$ can be restricted to $S(a,\varphi)$, where  
$$
S := \{s \in R\; |\; \sigma(s) =s\}\subseteq R,
$$
is the fixed point subalgebra of $R$. If also $\sigma(\mu) = \mu$ (which e.g.\ is necessarily the case if there is a regular element in $R_\sigma^R$, see Remark~\ref{rem.regular}), then $\sigma_\mu$, restricted to $S(a,\varphi)$,  is the degree-counting extension of the identity automorphism of $S$ of coarseness  $\mu$. In this case the skew derivations listed in Theorem~\ref{thm.Weyl.deriv} restrict to skew derivations on $S(a,\varphi)$.
\end{remark}

\begin{definition}\label{definition.elementary}
The skew derivations listed in equations \eqref{monomials} will be referred to as {\em elementary}. The integer index $m$ of $\partial_m$ is called a {\em weight}.
\end{definition}

The construction of Theorem~\ref{thm.Weyl.deriv} can be given a cohomological interpretation
\begin{corollary}\label{cor.Hoch}
In the set-up of Theorem~\ref{thm.Weyl.deriv}, for all $m$, the assignment $\alpha_m\mapsto \partial_m$ induces an injective map
$$
HH^1_{\varphi^m\circ\sigma;\, \varphi^{m-1}(\mu),\varphi}(R) \longrightarrow HH^1_{\sigma_\mu}\left(R(a,\varphi)\right),
$$
of (doubly in the domain) twisted Hochschild cohomology groups.
\end{corollary}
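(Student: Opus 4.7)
First, I would appeal to Theorem~\ref{thm.Weyl.deriv} itself to confirm well-definedness on cocycles: a $1$-cocycle $\alpha_m$ of the doubly twisted complex is exactly a $\varphi^m\circ\sigma$-skew derivation on $R$ satisfying the compatibility $\alpha_m\circ\varphi=\varphi^m(\mu)\,\varphi\circ\alpha_m$, and the verification carried out in the proof of the theorem for the elementary derivation $\partial_m$ of \eqref{monomial.m} shows that $\partial_m$ is a $\sigma_\mu$-skew derivation on $R(a,\varphi)$, i.e., a $1$-cocycle of the Hochschild complex with coefficients in $R(a,\varphi)_{\sigma_\mu}$. The assignment $\alpha_m\mapsto\partial_m$ is manifestly additive.

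Next, to show that the map descends to cohomology, I would verify that coboundaries go to coboundaries. A $0$-cochain of the doubly twisted complex is an element $c\in R$ satisfying $\varphi^{-1}(c)=\varphi^{m-1}(\mu)\,c$, equivalently $\varphi(c)=\varphi^m(\mu^{-1})\,c$. For the corresponding coboundary $\alpha_m(r)=c(\varphi^m\circ\sigma)(r)-rc$, I would form the inner skew derivation $\partial_{cx^m}$ on $R(a,\varphi)$ defined by $w\mapsto cx^m\sigma_\mu(w)-w\,cx^m$. Direct calculation using the commutation rules \eqref{gW}, the centrality of $\mu$, the identity $\sigma(a)=a$, and the displayed $0$-cocycle condition should yield
\[
\partial_{cx^m}(r)=\alpha_m(r)\,x^m,\qquad \partial_{cx^m}(x)=\bigl(c\varphi^m(\mu^{-1})-\varphi(c)\bigr)x^{m+1}=0,\qquad \partial_{cx^m}(y)=\alpha_m(a)\,x^{m-1}\mu,
\]
which agrees with $\partial_m$ as defined in \eqref{monomial.m}. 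Hence the induced map on cohomology is well-defined.

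For injectivity, suppose $\partial_m=\partial_z$ is inner for some $z\in R(a,\varphi)$, written uniquely as $z=r_0+\sum_{k>0}r_k x^k+\sum_{l>0}s_l y^l$. Since $\sigma_\mu$ preserves the $\ZZ$-grading of $R(a,\varphi)$, the equality $\partial_z(w)=\partial_m(w)$ decomposes component-wise. Extracting the $x^m$-coefficient of $\partial_z(r)=z\sigma(r)-rz$ for $r\in R$ (using $x^k r=\varphi^k(r)x^k$) produces $\alpha_m(r)=r_m(\varphi^m\circ\sigma)(r)-rr_m$, identifying $\alpha_m$ as the inner $(\varphi^m\circ\sigma)$-skew derivation associated with $r_m$. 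Extracting the $x^{m+1}$-coefficient of the equation $\partial_z(x)=z\mu^{-1}x-xz=0$ gives $r_m\varphi^m(\mu^{-1})-\varphi(r_m)=0$, which is precisely the $0$-cocycle condition on $r_m$ in the doubly twisted complex. Thus $\alpha_m$ is the coboundary of $r_m$, proving injectivity.

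The main obstacle lies in the injectivity step: the element $z$ spreads across all graded components of $R(a,\varphi)$, and one has to recognize that the cohomological data of $\alpha_m$ is carried entirely by the single homogeneous piece $r_m x^m$ of $z$, while the required $0$-cocycle condition on $r_m$ emerges from an entirely separate coefficient — the $x^{m+1}$ coefficient of $\partial_z(x)$. The negative-weight cases proceed by symmetric extraction of the $y^{|m|}$- and $y^{|m|+1}$-coefficients, or alternatively by pulling the statement back through the $x$-$y$ symmetry \eqref{x-y}.
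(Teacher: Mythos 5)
Your proof is correct and follows essentially the same route as the paper: both arguments decompose the candidate inner derivation along the $\ZZ$-grading of $R(a,\varphi)$ and read off the $x^m$-component of $\partial_z(r)$ to identify $\alpha_m$ as the inner $(\varphi^m\circ\sigma)$-skew derivation associated with $r_m$. You are in fact slightly more careful than the paper at one point: by extracting the $x^{m+1}$-coefficient of $\partial_z(x)=0$ you confirm that the witnessing element $r_m$ satisfies the $0$-cocycle condition $\varphi(r_m)=\varphi^m(\mu^{-1})\,r_m$ of the doubly twisted complex, a step the paper's proof leaves implicit but which is needed for $\alpha_m$ to be a coboundary in $HC^1_{\varphi^m\circ\sigma;\,\varphi^{m-1}(\mu),\varphi}(R)$ rather than merely inner in the singly twisted sense.
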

\begin{proof}
Since $(\alpha_m, \varphi^m\circ\sigma)$ is a skew derivation, it is an element of $HC^1(R,R_{\varphi^m\circ \sigma})$, the first of conditions \eqref{q-deriv} implies that $\alpha_m \in HC^1_{\varphi^m\circ\sigma;\, \varphi^{m-1}(\mu),\varphi}(R)$. If $\alpha_m$ is inner with respect to $s \in HC^0_{\varphi^m\circ\sigma;\, \varphi^{m-1}(\mu),\varphi}(R)$, i.e.\ an element of $R$ such that $s = \varphi^{m}(\mu)\varphi(s)$, then one easily checks that $\partial_m$ is inner with respect to $sx^{m}$. This proves the existence of the map between cohomology groups. 

If $\partial_m$ is inner, then for all $r\in R$,
\begin{eqnarray*}
\partial_m(r)  &=& \sum_k s_kx^k \sigma(r) + \sum_l r_ly^l\sigma(r) - \sum_krs_kx^k - \sum_lrr_ly^l\\
&=& \sum_k\left(s_k\varphi^k(\sigma(r)) - rs_k\right)x^k +  \sum_l\left(r_l\varphi^{-l}(\sigma(r)) - rr_l\right)y^l 
= \alpha_m(r) x^m,
\end{eqnarray*}
which implies that $r_l =0$ for all $l$, and $s_k =0$ for all $k\neq m$. Hence
$$
\alpha_m(r) = s_m \varphi^m(\sigma(r)) - rs_m,
$$
i.e.\ $\alpha_m$ is inner. Therefore, the constructed map is an additive monomorphism, as stated.
\end{proof}

The proof of Theorem~\ref{thm.Weyl.deriv} provides one with almost full classification of skew derivations of a generalized Weyl algebra.
\begin{corollary}\label{cor.Weyl.deriv}
Let $R(a,\varphi)$ be a generalized Weyl algebra  with $a\in R$ neither zero nor a zero divisor, and let $\sigma$ be an automorphism of $R$ commuting with $\varphi$ and fixing $a$. Let $\sigma_\mu$ be the degree-counting extension of $\sigma$ of coarseness $\mu$. If  $(\partial, \sigma_\mu)$ is a skew derivation of $R(a,\varphi)$ such that either
\begin{rlist}
\item $\partial(R) \subset R(a,\varphi)_+$  and  $\partial(x) =0$, or 
\item  $\partial(R) \subset R(a,\varphi)_-$  and $\partial(y) =0$, 
\end{rlist}
then it is of the type described in Theorem~\ref{thm.Weyl.deriv}.
\end{corollary}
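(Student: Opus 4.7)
The plan is to address case (i) in detail; case (ii) will then follow by transport along the $x$-$y$ symmetry $\Psi$ of \eqref{x-y}, which interchanges positive and negative parts and intertwines $\sigma_\mu$ with the degree-counting extension of $\sigma$ on $R(\varphi(a),\varphi^{-1})$ of coarseness $\varphi^{-1}(\mu^{-1})$. For case (i), the first step is to exploit the uniqueness of the normal form together with the hypothesis $\partial(R)\subset R(a,\varphi)_+$ to write $\partial(r)=\sum_{m\geq 1}\alpha_m(r)\,x^m$ for uniquely determined additive maps $\alpha_m\colon R\to R$. One then sets $\alpha_0:=0$, $\alpha_{-n}:=0$ for $n\geq 1$, and $b:=c:=0$; these choices trivially satisfy the conditions on $\alpha_0$, $b$, $c$ in \eqref{q-deriv}.

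The bulk of the work is to verify the remaining conditions of Theorem~\ref{thm.Weyl.deriv}. Applying the $\sigma_\mu$-twisted Leibniz rule to $\partial(rs)$ with $r,s\in R$ and comparing $x^m$-coefficients via $x^ms=\varphi^m(s)x^m$ and $\sigma_\mu|_R=\sigma$ shows that $(\alpha_m,\varphi^m\circ\sigma)$ is a skew derivation of $R$ for each $m\geq 1$. The first condition in \eqref{q-deriv} is obtained similarly by applying $\partial$ to $xr=\varphi(r)x$, using $\partial(x)=0$ and $\sigma_\mu(x)=\mu^{-1}x$, and comparing coefficients of $x^{m+1}$. To identify $\partial(y)$, apply $\partial$ to $yx=a$: since $\partial(x)=0$, this produces $\partial(y)\mu^{-1}x=\sum_{m\geq 1}\alpha_m(a)\,x^m$. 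Expand $\partial(y)$ in its normal form as $\beta_0+\sum_{k\geq 1}\beta_k x^k+\sum_{l\geq 1}\gamma_l y^l$, and compute the left-hand side using the commutation relations $x^k\mu^{-1}=\varphi^k(\mu^{-1})x^k$, $y^l\mu^{-1}=\varphi^{-l}(\mu^{-1})y^l$, and the identity $y^l x=\varphi^{-(l-1)}(a)\,y^{l-1}$ (obtained inductively from $yx=a$ and $yr=\varphi^{-1}(r)y$).

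The delicate step---and the only place where the non-zero-divisor hypothesis on $a$ is essential---is the ensuing comparison of coefficients in the normal-form decomposition. The coefficient of $y^{l-1}$ on the left equals $\gamma_l\varphi^{-l}(\mu^{-1})\varphi^{-(l-1)}(a)$; since $\mu$ is a unit and $\varphi^{-(l-1)}(a)$ is not a zero divisor, this forces $\gamma_l=0$ for all $l\geq 2$, while the degree-zero term $\gamma_1\varphi^{-1}(\mu^{-1})a$ gives $\gamma_1=0$ for the same reason. Equating positive-degree coefficients then determines $\beta_0=\alpha_1(a)\mu$ and $\beta_k=\alpha_{k+1}(a)\varphi^k(\mu)$ for $k\geq 1$, which after reindexing matches \eqref{delta.y} with $b=c=\alpha_0=0$ and $N=0$. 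This exhibits $\partial$ as an instance of the construction of Theorem~\ref{thm.Weyl.deriv}, completing case (i) and hence, via $\Psi$, the whole statement.
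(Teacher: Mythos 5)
Your proposal is correct and follows essentially the same route as the paper: write $\partial|_R$ in normal form, apply $\partial$ to $yx=a$ with $\partial(x)=0$, use that $a$ is not a zero divisor to kill the $y$-terms and pin down $\partial(y)$, and deduce case (ii) from the $x$-$y$ symmetry. You merely make explicit some necessity checks (the skew-derivation property of the $\alpha_m$ and the condition from $xr=\varphi(r)x$) that the paper delegates to the proof of Theorem~\ref{thm.Weyl.deriv}.
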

\begin{proof}
In the first case necessarily,
$$
\partial(r) = \sum_{m=1}^M\alpha_m(r)\, x^m,  \qquad \mbox{for all $r\in R$}. 
$$
Setting $\partial(x) =0$, assuming the general form
$$
\partial(y) = \sum_{i=0} c_i x^i +\sum_{j=1} d_j y^j,
$$
and demanding $\partial(yx -a) = 0$, one obtains:
\begin{eqnarray*}
\sum_{m=1}^M\alpha_m(a)\, x^m &=&  \sum_{i=0} \varphi^i\left(\mu^{-1}\right)c_i x^{i +1}+\sum_{j=1} \varphi^{-j}\left(\mu^{-1}\right) d_j y^jx\\
&=& \sum_{i=0} \varphi^i\left(\mu^{-1}\right) c_i x^{i +1}+\sum_{j=1} \varphi^{-j}\left(\mu^{-1}\right) d_j \varphi^{-j+1}(a)y^{j-1}.
\end{eqnarray*}
This implies that $d_j =0$, for all $j$, while $c_{m-1} = \varphi^{m-1}\left(\mu\right) \alpha_m(a)$.  The proof of Theorem~\ref{thm.Weyl.deriv} affirms the necessity of conditions listed in Theorem~\ref{thm.Weyl.deriv}. The other case is deduced by the $x$-$y$ symmetry.
\end{proof}

So far we have made no restrictions on the central unit $\mu\in R$, which determined the coarseness of the degree-counting automorphism. In all examples we have in mind, however, where typically $R$ is an algebra over a field  and $\mu$ is a scalar parameter, $\mu$ is a central element in the whole of the generalized Weyl algebra $R(a,\varphi)$, or equivalently, $\varphi(\mu) = \mu$. Furthermore, if $\mu$ is scalar, also $\sigma(\mu) = \mu$. Having these typical applications in mind and  to avoid undue complications in the formulae we make this assumption in the following proposition.

\begin{proposition}\label{prop.q-deriv}
Let $R(a,\varphi)$ be a generalized Weyl algebra and let $\sigma$ be an automorphism of $R$ commuting with $\varphi$ and fixing $a$.  Let $\sigma_\mu$ be the degree-counting extension of $\sigma$ of coarseness $\mu$  such that $\varphi(\mu) = \mu= \sigma(\mu)$, and let $Q$ be a central unit in $R$ such that $\varphi(Q) = \sigma(Q) = Q$. If $b\in R_\sigma^R$, then the skew derivation $(\partial, \sigma_\mu)$ \eqref{delta} is a skew $Q$-derivation if and only if
\begin{blist}
\item For all $i= -N, \ldots, M$, $(\alpha_i, \varphi^i\circ\sigma)$ are skew $Q$-derivations;
\item $\sigma\left(c - \varphi(b) +\mu^{-1} b\right) = Q\left(c - \varphi(b) +\mu^{-1} b\right)$.
\end{blist}
If $b\not\in R_\sigma^R$, then the skew derivation $(\partial, \sigma_\mu)$ \eqref{delta} is a skew $Q$-derivation if and only if, in addition to (a) and (b), $Q=1$.
\end{proposition}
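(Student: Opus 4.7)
The strategy is to verify the defining identity of a $Q$-skew derivation, $\sigma_\mu\circ\partial = Q\,\partial\circ\sigma_\mu$, directly on the generating set $R\cup\{x,y\}$ of $R(a,\varphi)$. Since both sides are additive and agree on generators determines them as $\sigma_\mu$-twisted derivations, equality on generators suffices. The key structural observation I will exploit is that $\sigma_\mu$ preserves the natural $\ZZ$-grading of $R(a,\varphi)$ (because $\sigma_\mu$ scales $x,y$ by the central unit $\mu^{\pm 1}$), so I will decompose $\partial$ into its weight-homogeneous elementary pieces $\sum_{m=0}^M \partial_m + \sum_{n=1}^N \partial_{-n} + \bar\partial_b$ from the proof of Theorem~\ref{thm.Weyl.deriv} and check the $Q$-derivation identity weight-by-weight.

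For each nonzero weight $m$, evaluating on $r \in R$ will yield the requirement $\sigma(\alpha_m(r))\mu^{-m} = Q\alpha_m(\sigma(r))$, using $\sigma_\mu(x^m) = \mu^{-m}x^m$ (a consequence of $\varphi(\mu)=\mu$). Combined with the twist relation $\alpha_m\circ\varphi^m = \mu^m\varphi^m\circ\alpha_m$ (iterated from \eqref{q-deriv}) and $\sigma\circ\varphi = \varphi\circ\sigma$, this rewrites exactly as $(\varphi^m\circ\sigma)\circ\alpha_m\circ(\varphi^m\circ\sigma)^{-1} = Q\alpha_m$, i.e.\ condition (a) for $\alpha_m$. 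The evaluations on $x$ and $y$ produce no additional constraints under (a), using $\sigma(a)=a$ and $\sigma(\mu)=\mu$. The $\partial_{-n}$ sector is handled by the same calculation with the $x$-$y$ symmetry \eqref{x-y}.

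The weight-zero piece $\partial_0 + \bar\partial_b$ requires more care. I will first note that $\bar\partial_b$ is the inner $\sigma_\mu$-derivation $\bar\partial_b(z) = b\sigma_\mu(z) - zb$, so a direct computation gives $\sigma_\mu\circ\bar\partial_b\circ\sigma_\mu^{-1} = \bar\partial_{\sigma(b)}$. Evaluating the full identity on $r \in R$ and cancelling the $\alpha_0$-contributions via condition (a) for $\alpha_0$ leaves the residual identity $(\sigma(b)-Qb)\sigma^2(r) = \sigma(r)(\sigma(b)-Qb)$ for all $r$, equivalently $\sigma(b) - Qb \in R_\sigma^R$. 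Evaluations on $x$ and $y$ using \eqref{delta.x}, \eqref{delta.y}, $\sigma_\mu(x)=\mu^{-1}x$, $\sigma_\mu(y)=y\mu$, and the commutations of $\sigma,\varphi,\mu$ will collapse, after cancellation, to the single identity (b): writing $C := c - \varphi(b) + \mu^{-1}b$, the $x$-coefficient check reads $\sigma(C)=QC$ and the $y$-coefficient check yields the same identity via $\varphi^{-1}$ and $\mu$.

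It remains to analyse the residual constraint $\sigma(b) - Qb \in R_\sigma^R$ and disentangle the two cases. If $b \in R_\sigma^R$, applying $\sigma$ to $rb = b\sigma(r)$ gives $\sigma(b) \in R_\sigma^R$, and centrality and $\sigma$-invariance of $Q$ give $Qb \in R_\sigma^R$, so $\sigma(b)-Qb \in R_\sigma^R$ holds automatically and (a), (b) are both sufficient and necessary. If $b \notin R_\sigma^R$, the constraint becomes binding, and one pairs it with condition (b) rewritten as $\sigma(c) - Qc = \varphi(\sigma(b)-Qb) - \mu^{-1}(\sigma(b)-Qb)$; testing on an $r \in R$ witnessing $rb \neq b\sigma(r)$ and matching coefficients degree-by-degree forces $\sigma(b) = Qb$ and ultimately collapses the twist to $Q=1$. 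The principal obstacle will be precisely this last step — extracting that $Q \neq 1$ is incompatible with $b\notin R_\sigma^R$ given the combined constraints — whereas the weight-by-weight reduction and the individual generator checks are lengthy but routine computations driven by the twist identities \eqref{q-deriv}, the centrality of $\mu,Q$, and the commutation $\sigma\varphi=\varphi\sigma$.
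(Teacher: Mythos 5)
Your overall strategy --- decompose $\partial$ into its weight-homogeneous elementary pieces, use that $\sigma_\mu$ and multiplication by the central unit $Q$ preserve the $\ZZ$-grading of $R(a,\varphi)$, and verify $\sigma_\mu\circ\partial\circ\sigma_\mu^{-1}=Q\,\partial$ degree by degree on generators --- is exactly the paper's. Your reduction of the nonzero-weight sectors to condition (a) (via $\sigma\circ\alpha_m\circ\sigma^{-1}=\mu^mQ\,\alpha_m$ combined with the iterated form of \eqref{q-deriv}) and of the degree-preserving parts of $\partial(x)$, $\partial(y)$ to condition (b) is correct, as is the identity $\sigma_\mu\circ\bar\partial_b\circ\sigma_\mu^{-1}=\bar\partial_{\sigma(b)}$ and the observation that $b\in R_\sigma^R$ makes the residual weight-zero constraint automatic.

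The genuine gap is the last step, which you yourself flag as the principal obstacle. The residual constraint you extract from the weight-zero sector on $R$ is $\bar\partial_{\sigma(b)-Qb}\!\mid_R=0$, i.e.\ $\sigma(b)-Qb\in R_\sigma^R$. Your proposed finish --- testing on an $r$ with $rb\neq b\sigma(r)$ and ``matching coefficients degree-by-degree'' to force $\sigma(b)=Qb$ and then $Q=1$ --- does not go through: there are no degrees left to compare (everything in this sector lies in $R$), the group $R_\sigma^R$ is in general nonzero, so $\sigma(b)-Qb\in R_\sigma^R$ does not imply $\sigma(b)=Qb$, and even $\sigma(b)=Qb$ would not by itself yield $Q=1$. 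The paper closes this step by a different mechanism: $\bar\partial_b$ is precisely the inner $\sigma_\mu$-skew derivation of $R(a,\varphi)$ determined by $b\in R$, hence, by the observation recorded in Section~\ref{sec.prel}, a skew $1$-derivation; given (a), the $Q$-derivation identity in the weight-zero sector then reduces to $Q\,\bar\partial_b=\bar\partial_b$, and since $b\notin R_\sigma^R$ guarantees $\bar\partial_b\!\mid_R\neq 0$, this forces $Q=1$. The same input also gives the sufficiency of $Q=1$ together with (a) and (b), which in your formulation would still require checking $\sigma(b)-b\in R_\sigma^R$ separately. You need to import this ``inner implies skew $1$-derivation'' fact (or an equivalent substitute) to close the argument.
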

\begin{proof}
First note that $(\alpha_i, \varphi^i\circ\sigma)$ is a skew $Q$-derivation if and only if 
\begin{equation}\label{alpha.skew}
\sigma\circ\alpha_i\circ\sigma^{-1} = \mu^i Q\alpha_i.
\end{equation}
Indeed, in view of the repeated use of \eqref{q-deriv},
$$
\varphi^i\circ \sigma\circ \alpha_i\circ\sigma^{-1}\circ\varphi^{-i} = \mu^{-i} \, \sigma\circ \alpha_i\circ\sigma^{-1},
$$
hence $\varphi^i\circ \sigma\circ \alpha_i\circ\sigma^{-1}\circ\varphi^{-i} = Q \alpha_i$ if and only if the condition \eqref{alpha.skew} is fulfilled. Observe that, since $\sigma$ fixes $a$ and commutes with $\varphi$, conditions \eqref{alpha.skew} imply,
\begin{equation}\label{alpha.skew.a}
\sigma\left(\alpha_i \left(\varphi^k\left(a\right)\right)\right) = \mu^{i} Q\, \alpha_i \left(\varphi^k\left(a\right)\right),
\end{equation}
for all $i,k\in \ZZ$.

To prove that skew derivation $(\partial, \sigma_\mu)$ \eqref{delta} is a skew $Q$-derivation, we need to check that $ \sigma_\mu\circ\partial\circ\sigma_\mu^{-1} =  Q\partial$ is satisfied. First, suppose that (a) and (b) hold, for all $b\in R_\sigma^R$, we consider a general skew derivation of the form \eqref{delta}
$$
\partial(r) = \sum_{m=0}^M\alpha_m(r)\, x^m +\sum_{n=1}^N\alpha_{-n}(r)\,  y^n  ,
$$
$$
\partial(x) = c' x + \sum_{n=1}^N \varphi \left(\alpha_{-n}\left(a\right)\right) y^{n-1},
$$
$$
\partial(y) = \mu \left(\frac{\alpha_0(a)}a - \varphi^{-1}(c') \right) y + \mu\, \sum_{m=1}^M \alpha_m(a)\, x^{m-1},
$$
where $c' = c - \varphi(b) + \mu^{-1} b \in R_\sigma^R$. Note that since in this case $b\in R_\sigma^R$, the contribution coming from $\bar\partial_b$ can be (and has been) absorbed into other elementary derivations.
Then, using \eqref{alpha.skew} we compute
\begin{eqnarray*}
\sigma_\mu\circ\partial\circ\sigma_\mu^{-1}(r)&=& \sum_{m=0}^M \sigma\left(\alpha_m(\sigma^{-1}(r))\right)\, \sigma_\mu(x^m) +\sum_{n=1}^N \sigma\left(\alpha_{-n}(\sigma^{-1}(r))\right)\,  \sigma_\mu(y^n) \\
&=& \sum_{m=0}^M \mu^m Q \alpha_m(r) \mu^{-m} x^m+ \sum_{n=1}^N \mu^{-n} Q \alpha_{-n}(r) \mu^n y^n \\
&=& Q \left(\sum_{m=0}^M\alpha_m(r)\, x^m +\sum_{n=1}^N\alpha_{-n}(r)\,  y^n \right)
= Q\partial(r).
\end{eqnarray*}
Next, in view of \eqref{alpha.skew.a} and (b),
\begin{eqnarray*}
\sigma_\mu\circ\partial\circ\sigma_\mu^{-1}(x)&=& \mu\, \sigma_\mu\left(\partial(x)\right)\\
&=& \mu\, (\sigma\left(c'\right)  \sigma_\mu(x) + \sum_{n=1}^N \sigma\left(\varphi \left(\alpha_{-n}\left(a\right)\right)\right) \sigma_\mu(y^{n-1}))\\
&=& \mu(Q c' \mu^{-1} x+ \sum_{n=1}^N \mu^{-n} \sigma\left(\alpha_{-n}(\varphi(a))\right) \mu^{n-1} y^{n-1})\\
&=& Q c'  x+ \sum_{n=1}^N \mu^{-1} Q \alpha_{-n}(\varphi(a)) y^{n-1}\\
&=&  Q c'  x+ \sum_{n=1}^N Q \varphi(\alpha_{-n}(a)) y^{n-1}
= Q \partial(x).
\end{eqnarray*}
Furthermore, 
\begin{eqnarray*}
\sigma_\mu\circ\partial\circ\sigma_\mu^{-1}(y)&=& \mu^{-1} \sigma_\mu\left(\partial(y)\right)\\
&=& \mu^{-1}(\mu \sigma\left(\frac{\alpha_0(a)}a - \varphi^{-1}(c') \right) \sigma_\mu(y) + \mu\, \sum_{m=1}^M \sigma\left(\alpha_m(a)\right)\, \sigma_\mu(x^{m-1})\\
&=& \mu Q \left(\frac{\alpha_0(a)}a - \varphi^{-1}(c')\right) y + \sum_{m=1}^M  \mu^{m}Q \alpha_m(a)\,  \mu^{-m+1}x^{m-1}
= Q \partial(y),
\end{eqnarray*}
hence we conclude that $(\partial, \sigma_\mu)$ \eqref{delta} is a skew $Q$-derivation. On the other hand, if $(\partial, \sigma_\mu)$ is a skew $Q$-derivation, then all of the elementary skew derivations \eqref{monomials} are skew $Q$-derivations. Noting this for \eqref{monomial.0}, \eqref{monomial.m}, \eqref{monomial.-n} we obtain (a), while the skew $Q$-derivation property of \eqref{monomial.0} and \eqref{monomial.00} imply (b).

In the second part, since $b\not\in R_\sigma^R$ the inner derivation $\partial_{b}(r)$ in $\partial(r)$  is not zero, and hence it is a skew $1$-derivation. Therefore, by a similar calculation as in the first part, $(\partial, \sigma_\mu)$ \eqref{delta} is a skew $Q$-derivation if and only if $Q=1$ in addition to (a) and (b).
\end{proof}

\begin{proposition}\label{prop.grading}
Let $R$ be a $\ZZ$-graded ring and consider $R(a,\varphi)$ as a $\ZZ$-graded ring with the $(d,k)$-type grading. Let $\sigma$ be an automorphism of the graded ring $R$ commuting with $\varphi$ and fixing $a$. Let $\sigma_\mu$ be the degree-counting extension of $\sigma$ of coarseness $\mu$ of degree 0. Let $(\partial, \sigma_\mu)$ be the skew derivation associated as in Theorem~\ref{thm.Weyl.deriv} to the data $\alpha_i$, $b$, $c$. Then $\partial$ is a map of degree $l$  if and only if, 
\begin{equation}\label{grading.conditions}
\deg(\alpha_i) = l -ik +\frac{i - |i|}{2} d, \qquad \deg(b) = \deg(c) =l.
\end{equation}
\end{proposition}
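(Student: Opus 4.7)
The plan is to verify the equivalence by checking the degree of $\partial$ on the algebra generators $R$, $x$, and $y$, and then promoting the result to all of $R(a,\varphi)$ via the $\sigma_\mu$-twisted Leibniz rule. The essential preliminary is that $\sigma_\mu$ preserves the $\ZZ$-grading: since $\mu$ has degree $0$, $\sigma$ is graded, and $\varphi$ is degree-preserving (as part of the $(d,k)$-type graded structure), the formulas $\sigma_\mu(x) = \mu^{-1}x$ and $\sigma_\mu(y) = \varphi^{-1}(\mu)y$ show that $\sigma_\mu$ has degree $0$. With $\sigma_\mu$ degree-preserving, the identity $\partial(uv) = \partial(u)\sigma_\mu(v) + u\partial(v)$ propagates a uniform degree-shift by $l$ from homogeneous generators to all homogeneous elements, so it suffices to check that $\partial(r)$, $\partial(x)$, $\partial(y)$ are homogeneous of degrees $\deg(r)+l$, $k+l$, and $(d-k)+l$ respectively for all homogeneous $r\in R$.

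For the sufficiency direction, I would substitute the degree hypotheses \eqref{grading.conditions} into formulas \eqref{delta.r}--\eqref{delta.y} and check each summand. On homogeneous $r\in R$ of degree $p$: the term $\alpha_m(r)x^m$ has degree $\deg(\alpha_m)+p+mk = p+l$ precisely because $\deg(\alpha_m) = l-mk$; the term $\alpha_{-n}(r)y^n$ has degree $\deg(\alpha_{-n})+p+n(d-k) = p+l$ because $\deg(\alpha_{-n}) = l+nk-nd$; and the inner piece $b\sigma(r)-rb$ has degree $\deg(b)+p = p+l$. The analogous calculations on $\partial(x)$ and $\partial(y)$ use in addition that $a$ is homogeneous of degree $d$ (so $\alpha_0(a)/a$, $\alpha_m(a)$, and $\varphi(\alpha_{-n}(a))$ acquire the expected degrees), that $\deg(\mu)=0$, and that $\varphi$ is graded.

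The necessity direction exploits the $\ZZ$-graded left-$R$-module decomposition $R(a,\varphi) = R \oplus \bigoplus_{m\ge 1} Rx^m \oplus \bigoplus_{n\ge 1} Ry^n$, in which each summand is a graded submodule. If $\partial$ is of degree $l$, then projecting $\partial(r)$, $\partial(x)$, $\partial(y)$ onto each summand forces that projection to be homogeneous of the prescribed degree. Reading off the $Rx^m$- and $Ry^n$-components of $\partial(r)$ for $m,n\ge 1$ immediately recovers $\deg(\alpha_i) = l - ik + \frac{i-|i|}{2}d$ for $i\neq 0$, while the $Rx$-part of $\partial(x)$ and the $Ry$-part of $\partial(y)$ (together with the $R$-part of $\partial(r)$) pin down $\deg(\alpha_0) = \deg(b) = \deg(c) = l$.

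The main obstacle I anticipate is the mild redundancy in the parametrization of $\partial$ by $(\alpha_i,b,c)$ in Theorem~\ref{thm.Weyl.deriv}: the inner skew derivation $\bar\partial_b$ restricted to $R$ is indistinguishable, as a map $R\to R$, from a suitable modification of $\alpha_0$, so $\partial|_R$ alone is not sufficient to isolate the degree of $b$. To extract $\deg(b)=l$ cleanly, one must read $b$ off from its non-trivial contributions to $\partial(x)$ and $\partial(y)$, i.e.\ from the coefficient $c-\varphi(b)+\mu^{-1}b$ of $x$ and its counterpart in the $Ry$-part of $\partial(y)$, before matching with the $R$-part of $\partial(r)$. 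Once this care is taken, the remainder of the argument is a routine degree count in each of the elementary pieces \eqref{monomials}.
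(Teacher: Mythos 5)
Your proof is correct and follows essentially the same route as the paper's: a direct degree count on the elementary pieces \eqref{monomials}, using the graded left-$R$-module decomposition of $R(a,\varphi)$ into $R$, $Rx^m$ and $Ry^n$ to isolate each $\alpha_i$ in the necessity direction. Your explicit treatment of the overlap between $\bar\partial_b|_R$ and $\alpha_0$ in the weight-zero component is a refinement the paper's proof passes over silently, and it is the right place to be careful.
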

\begin{proof}
Indeed, we need here to check $\deg(\alpha_i) $ in three cases where $i$ is zero, positive and negative respectively. Notice that the last term of the first equality in \eqref{grading.conditions} will disappear in the first two cases.  Suppose that $\partial$ is a map of degree $l$, then \eqref{monomials} gives:
$$
\deg(\alpha_0)=\deg(\partial_0)=l, \quad \deg(c)=l,
$$
$$
\deg(\alpha_i)=\deg(\alpha_m)=\deg(\partial_m)-\deg(x^{m})=l-mk,
$$
and
$$
\deg(\alpha_i)=\deg(\alpha_{-n})=\deg(\partial_{-n})-\deg(y^{n})=l-n(d-k)=l+nk-nd.
$$
Put together this gives us most of \eqref{grading.conditions}. Finally, in the view of  \eqref{monomial.00} and $\deg(\sigma)=0$,  we can observe that $\deg(b)=l$. On the other hand, if \eqref{grading.conditions} holds, then \eqref{monomials} clearly implies that $\partial$ is a map of degree $l$.  
\end{proof}

Additional classes of skew derivations can be constructed for generalized Weyl algebras associated to automorphisms of finite orders.

\begin{proposition}\label{prop.finite.order}
Let $R(a,\varphi)$ be a generalized Weyl algebra and let $\sigma$ be an automorphism of $R$ commuting with $\varphi$ and fixing $a$. Let $\sigma_\mu$ be the degree-counting extension of $\sigma$ of coarseness $\mu$.  Assume that $\varphi$ has a finite order $D$, i.e.\ 
\begin{equation}\label{order}
\varphi^D = \id,
\end{equation} 
and let
$$
(\alpha_i ,\sigma)_{i=-N}^M, 
$$
be skew derivations on $R$ such that, for all $i = -N,\ldots, M$,
\begin{equation}\label{q-deriv.finite}
\alpha_i\circ \varphi= \mu\, \varphi \circ \alpha_i  , 
\quad  \quad \alpha_i(a) \in R_\sigma^R.
\end{equation}
For all $b_m, c_n\in R_\sigma^R$, set
\begin{subequations}\label{delta.f-o}
\begin{equation}\label{delta.r.f-o}
\partial(r) = \sum_{m=1}^M\alpha_m(r)\, x^{mD} +\sum_{n=1}^N\beta_{n}(r)\,  y^{nD}, \qquad \mbox{for all $r\in R$},
\end{equation}
\begin{equation}\label{delta.x.f-o}
\partial(x) = \sum_{m=1}^Mb_mx^{mD+1} + \mu^{-1}\sum_{n=1}^N \left(\alpha_{-n}(\varphi(a)) - \varphi\left(c_n\right)a\right) y^{nD-1},
\end{equation}
\begin{equation}\label{delta.y.f-o}
\partial(y) = \varphi^{-1}(\mu) \sum_{m=1}^M\left(\alpha_m(a)  - \varphi^{-1}\left( b_m\right) a \right) x^{mD-1} +\sum_{n=1}^N c_n\, y^{nD+1}.
\end{equation}
\end{subequations}
Then $\partial$ extends to a skew derivation $(\partial, \sigma_\mu)$ on $R(a,\varphi)$.
\end{proposition}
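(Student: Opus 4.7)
The plan is to mirror the strategy of Theorem~\ref{thm.Weyl.deriv}. Since $\sigma_\mu$-twisted skew derivations of $R(a,\varphi)$ form an abelian group, it suffices to decompose $\partial$ as a finite sum of elementary pieces and verify that each extends from the generators $R\cup\{x,y\}$ to a bona fide $\sigma_\mu$-twisted derivation on $R(a,\varphi)$. I would introduce four families, indexed by $m\in\{1,\dots,M\}$ and $n\in\{1,\dots,N\}$: an $\alpha_m$-piece sending $r\mapsto\alpha_m(r)x^{mD}$, $x\mapsto 0$, $y\mapsto\varphi^{-1}(\mu)\alpha_m(a)x^{mD-1}$; a $b_m$-piece sending $r\mapsto 0$, $x\mapsto b_m x^{mD+1}$, $y\mapsto -\varphi^{-1}(\mu)\varphi^{-1}(b_m)a\,x^{mD-1}$; together with the mirror $\alpha_{-n}$- and $c_n$-pieces as prescribed by \eqref{delta.x.f-o}--\eqref{delta.y.f-o}. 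The $x$--$y$ symmetry \eqref{x-y} identifies the negatively indexed families with the positively indexed ones after relabelling, so only the latter two need to be treated in detail.

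For each elementary piece the verification splits into two steps: first, the $\sigma$-twisted Leibniz rule on $R$, which is immediate from the assumption that $(\alpha_i,\sigma)$ is a skew derivation and is trivial for the $b_m$- and $c_n$-pieces; and second, compatibility of the extension via the $\sigma_\mu$-twisted Leibniz rule with each of the four defining relations \eqref{gW}. These latter checks are driven by the finite-order hypothesis $\varphi^D=\id$: the powers $x^{kD}$ and $y^{kD}$ become central over $R$ (since $x^{kD}r=\varphi^{kD}(r)x^{kD}=rx^{kD}$), and identities such as $x\cdot x^{mD-1}y = x^{mD-1}\varphi(a) = \varphi^{mD}(a)x^{mD-1} = ax^{mD-1}$ hold. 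These facts are precisely what allow the twisting index $\varphi^i\!\circ\!\sigma$ of Theorem~\ref{thm.Weyl.deriv} to collapse to plain $\sigma$, and what dictate the appearance of the shifts $mD\pm 1$ and $nD\pm 1$ in the formulae for $\partial(x), \partial(y)$.

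The main obstacle will be the compatibility with the mixed relations $xy=\varphi(a)$ and $yx=a$ for the $b_m$- and $c_n$-pieces. Here there is no $r$-derivative contribution, and the cancellation must occur purely between $\partial(x)\sigma_\mu(y)+x\partial(y)$ and $\partial(\varphi(a))=0$ (and analogously between $\partial(y)\sigma_\mu(x)+y\partial(x)$ and $\partial(a)=0$). The centralising hypotheses $b_m,c_n\in R_\sigma^R$ are essential at this point: they allow $b_m$ (resp.\ $c_n$) to be pulled through $\sigma_\mu(y)=\varphi^{-1}(\mu)y$ (resp.\ $\sigma_\mu(x)=\mu^{-1}x$) without incurring a $\sigma$-twist, while $\varphi^D=\id$ centralises the surviving $x^{kD}$- or $y^{kD}$-factors, so that the prescribed coefficient $-\varphi^{-1}(\mu)\varphi^{-1}(b_m)a$ (resp.\ $-\mu^{-1}\varphi(c_n)a$) produces exact cancellation. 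The analogous checks for the $\alpha_m$- and $\alpha_{-n}$-pieces use instead $\alpha_i(a)\in R_\sigma^R$ together with the intertwining $\alpha_i\circ\varphi=\mu\,\varphi\circ\alpha_i$; the latter, combined with $\varphi^D=\id$, also handles the $R$-bimodule relations $xr=\varphi(r)x$ and $yr=\varphi^{-1}(r)y$ routinely, completing the verification.
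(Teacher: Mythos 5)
Your proposal is correct and follows essentially the same route as the paper: the paper likewise decomposes $\partial$ into elementary pieces $\partial_m$ (carrying the $\alpha_m$- and $b_m$-data together) and $\partial_{-n}$ (carrying $\alpha_{-n}$ and $c_n$), verifies the four relations \eqref{gW} for the positively indexed pieces using $\varphi^D=\id$ exactly as you describe, and disposes of the negatively indexed ones by the $x$-$y$ symmetry, so your finer four-family split is only a harmless refinement. One small misattribution worth noting: for the $b_m$-piece the mixed relations $xy=\varphi(a)$ and $yx=a$ go through without invoking $b_m\in R_\sigma^R$ (only $\varphi^D=\id$ and the chosen coefficient matter there), whereas that hypothesis is precisely what makes the bimodule relations $xr=\varphi(r)x$ and $yr=\varphi^{-1}(r)y$ close up for the $b_m$- and $c_n$-pieces --- but since you propose to check all four relations for every piece anyway, this does not affect the soundness of the plan.
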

\begin{proof}
As was the case for Theorem~\ref{thm.Weyl.deriv}, we prove that, for all $m$ and $n$, the following maps extend to the derivations of $R(a,\varphi)$,
$$
\partial_m(r) = \alpha_m(r)\, x^{mD}, \quad \!\!\! \partial_m(x) = b_mx^{mD+1}, \quad \!\!\! \partial_m(y) = \varphi^{-1}(\mu) \left(\alpha_m(a)  - \varphi^{-1}\left( b_m\right) a \right) x^{mD-1},
$$
$$
\partial_{-n}(r) = \alpha_{-n}(r)\,  y^{nD}, \quad\!\! \partial_{-n}(x) =\mu^{-1}\left(\alpha_{-n}(\varphi(a)) - \varphi\left(c_n\right)a\right) y^{nD-1}, \quad \!\! \partial_{-n}(y) =c_n\, y^{nD+1}.
$$

First, since $(\alpha_m,\sigma)$ is a skew derivation on $R$, and $\sigma_\mu$ restricted to $R$ is equal to $\sigma$, $\partial_m$ satisfies the $\sigma_\mu$-twisted Leibniz rule. We need to check that $\partial_m $ extended to the whole of $R(a,\varphi)$ by the $\sigma_\mu$-twisted Leibniz rule preserves all relations \eqref{gW}. In view of \eqref{order} we  make constant use of the fact that all powers of $\varphi$ can be calculated modulo $D$ and start by computing
\begin{eqnarray*}
\partial_m(xy-\varphi(a))\!\!\!&=& \!\!\!b_m x^{mD+1}  y \mu+ x \varphi^{-1}(\mu)(\alpha_m(a)-\varphi^{-1}(b_m)a) x^{mD-1}-\alpha_m(\varphi(a)) x^{mD}\\
&=& \!\!\! \mu b_m \varphi(a) x^{mD} + \mu\varphi(\alpha_m(a)) x^{mD} - \mu b_m \varphi(a) x^{mD}
 - \mu \varphi(\alpha_m(a)) x^{mD}
 =0,
\end{eqnarray*}
where \eqref{gW}, \eqref{q-deriv.finite} and the centrality of $\mu\in R$ were used.
In a similar way one easily finds that,
\begin{eqnarray*}
\partial_m(yx-a)\!\!\!&=& \!\!\!\varphi^{-1}(\mu)\left(\alpha_m\left(a\right)-\varphi^{-1}(b_m)a\right)x^{mD-1} \mu^{-1} x+ y b_m x^{mD+1} - \alpha_m(a)x^{mD}\\
&=&\!\!\!\alpha_m(a)x^{mD}- \varphi^{-1}(b_m)a x^{mD}+\varphi^{-1}(b_m) a x^{mD}-\alpha_m(a) x^{mD}
=0.
\end{eqnarray*}
Furthermore, for all $r\in R$,
\begin{eqnarray*}
\partial_m(xr-\varphi(r)x)\!\!\!&=& \!\!\!b_m x^{mD+1}\sigma(r)+x \alpha_m(r)x^{mD}-\alpha_m(\varphi(r))x^{mD}\mu^{-1}x-\varphi(r)b_m x^{mD+1}\\
&=&\!\!\! b_m \varphi(\sigma(r))x^{mD+1}+\varphi(\alpha_m(r))x^{mD+1}-\mu^{-1}\alpha_m(\varphi(r))x^{mD+1}\\
&&\!\!\!-\varphi(r)b_mx^{mD+1}=b_m\varphi(\sigma(r))x^{mD+1}-\varphi(r)b_mx^{mD+1}=0,
\end{eqnarray*}
where the first equality follows by the definition of $\partial_m$ via the twisted Leibniz rule. The second one follows by \eqref{gW} and the centrality of $\mu$, the third one by \eqref{q-deriv.finite}, while the last one follows by the fact that $b_m \in R_\sigma^R$.
In a similar way,
\begin{eqnarray*}
\partial_m(yr-\varphi^{-1}(r)y)\!\!\!&=& \!\!\! \varphi^{-1}( \mu) \left(\alpha_m(a)  - \varphi^{-1}\left( b_m\right) a \right) x^{mD-1}\sigma(r)+y\alpha_m(r)x^{mD}\\
&& \!\!\! \!\!\! - \alpha_m(\varphi^{-1}(r))x^{mD} y\mu -\varphi^{-1}(r) \varphi^{-1}( \mu)\left(\alpha_m(a)  - \varphi^{-1}\left( b_m\right) a \right) x^{mD-1}\\
\!\!\!&=& \!\!\!\varphi^{-1}( \mu)\alpha_m(a)\varphi^{-1}(\sigma(r))x^{mD-1}-\varphi^{-1}( \mu) \varphi^{-1}(b_m)a \varphi^{-1}(\sigma(r)) x^{mD-1}\\
 && \!\!\! \!\!\!+\varphi^{-1}(\alpha_m(r))a x^{mD-1} -\varphi^{-1}( \mu) \alpha_m(\varphi^{-1}(r))\varphi^{mD}(a) x^{mD-1}\\
&&\!\!\! \!\!\!-\varphi^{-1}( \mu) \varphi^{-1}(r)\alpha_m(a)x^{mD-1}+\varphi^{-1}( \mu) \varphi^{-1}(r) \varphi^{-1}(b_m)a x^{mD-1}=0.
\end{eqnarray*}
Thus, $\partial_m$ vanishes on all generators of the ideal in $R\langle x,y\rangle$ that defines $R(a,\varphi)$, hence $\partial_m$ extends to a $\sigma_\mu$-twisted derivation to the whole of $R(a,\varphi)$.
The fact that  $\partial_{-n}$ extends to the whole of $R(a,\varphi)$ as a $\sigma_\mu$-derivations follows by the $x$-$y$ symmetry.
\end{proof}

\section{Orthogonal pairs of skew derivations on generalized Weyl algebras}\label{sec.ortho}\setcounter{equation}{0}
The orthogonality of a system of skew derivations on  a given ring $A$ relies heavily on the structure of $A$, and -- in general -- very little can be said even in the case of rather explicitly defined generalized Weyl algebras over $R$ if the  ring $R$ is not specified. The cases of $R$ being a polynomial ring in one and two variables are discussed in some detail in \cite{Brz:dif}. Here, rather than specifying $R$, we would like to concentrate on a general case, and in such a case at least some examples of pairs of orthogonal skew derivations (included in the families described in Theorem~\ref{thm.Weyl.deriv}) can be given. We start with the following simple observation.
\begin{lemma}\label{lemma.ortho}
Let $(\partial_i, \sigma_i)_{i=1}^n$ be a family of skew derivations on a ring $A$. If there exist 
$\{b_1,b_2,\ldots, b_n\} \subset A$
such that
\begin{equation}\label{ortho.lemma}
A\partial_i(b_{i})A = A, \qquad \partial_k(b_{i}) = 0, \quad \mbox{for all $i\neq k$},
\end{equation}
then $(\partial_i, \sigma_i)_{i=1}^n$  is an orthogonal system of skew derivations.
\end{lemma}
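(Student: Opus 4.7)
The plan is to exploit the equivalent formulation \eqref{ortho.x} of the orthogonality condition rather than working directly with \eqref{ortho}. The presence of the extra twisting set $\{c_{i\,t}\}$ in \eqref{ortho.x} will be essential, because in the setup of the lemma the elements $b_i$ appear with no multiplier on the right, so we need a way to absorb extra factors without breaking the diagonal property $\partial_k(b_i)=0$ for $k\neq i$.

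First, for each index $i \in \{1,\ldots,n\}$, the assumption $A\partial_i(b_i)A = A$ means that $1_A$ lies in the two-sided ideal generated by $\partial_i(b_i)$, so there exist finite families $\{u_{i\,t}\}, \{v_{i\,t}\}\subset A$ such that
\[
1 \;=\; \sum_t u_{i\,t}\,\partial_i(b_i)\,v_{i\,t}.
\]
Next, I define the three families required by \eqref{ortho.x} by setting $a_{i\,t} := u_{i\,t}$, $b_{i\,t} := b_i$ (constant in $t$), and $c_{i\,t} := v_{i\,t}$. With this choice, for $k = i$ the twist collapses since $\sigma_i\bigl(\sigma_i^{-1}(c_{i\,t})\bigr) = c_{i\,t} = v_{i\,t}$, so
\[
\sum_t a_{i\,t}\,\partial_i(b_{i\,t})\,\sigma_i\!\left(\sigma_i^{-1}(c_{i\,t})\right) \;=\; \sum_t u_{i\,t}\,\partial_i(b_i)\,v_{i\,t} \;=\; 1.
\]
For $k \neq i$ the hypothesis $\partial_k(b_i)=0$ makes every term in the corresponding sum vanish, giving $0 = \delta_{ik}$ as required. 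Thus \eqref{ortho.x} is satisfied.

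Finally, I invoke the equivalence between \eqref{ortho} and \eqref{ortho.x} established in Section~\ref{sec.prel} (via the twisted Leibniz rule) to conclude that the family $(\partial_i,\sigma_i)_{i=1}^n$ is an orthogonal system of skew derivations. There is no real obstacle here: the only subtlety is recognising that the flexible form \eqref{ortho.x}, rather than \eqref{ortho}, is the natural target, and that the constant choice $b_{i\,t}=b_i$ causes no loss of generality.
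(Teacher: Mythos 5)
Your proposal is correct and follows essentially the same route as the paper: both write $1=\sum_t u_{i\,t}\,\partial_i(b_i)\,v_{i\,t}$ from the ideal condition and then use the twisted Leibniz rule to pass to the two-set form \eqref{ortho}, the only difference being that you cite the already-established equivalence \eqref{ortho.x}~$\Leftrightarrow$~\eqref{ortho} while the paper re-derives that implication inline (including the $i\neq k$ cancellation). No gaps.
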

\begin{proof}
The fact that the ideal generated by $\partial_i(b_i)$ is equal to $A$ is equivalent to the existence of 
finite subsets $\{a_{i\,t}\}, \{c_{i\,t}\}$ of elements of $A$  such that, 
$$
1 = \sum_t a_{i\,t}\partial_i(b_{i}) c_{i\, t}  = \sum_t  a_{i\,t}\partial_i\left(b_{i} \sigma^{-1}_i\left(c_{i\, t}\right)\right) -   \sum_t  a_{i\,t}b_{i} \partial_i\left(\sigma^{-1}_i\left(c_{i\, t}\right)\right),
$$
where the second equality follows by the $\sigma_i$-twisted Leibniz rule. This gives condition \eqref{ortho} with $i=k$. If $i\neq k$, then,
\begin{eqnarray*}
 \sum_t  a_{i\,t}\partial_k\left(b_{i} \sigma^{-1}_i\left(c_{i\, t}\right)\right) &-&   \sum_t  a_{i\,t}b_{i} \partial_k\left(\sigma^{-1}_i\left(c_{i\, t}\right)\right)\\
 & = & \sum_t  a_{i\,t}b_{i}  \partial_k\left(\sigma^{-1}_i\left(c_{i\, t}\right)\right) -   \sum_t  a_{i\,t}b_{i} \partial_k\left(\sigma^{-1}_i\left(c_{i\, t}\right)\right) =0,
\end{eqnarray*}
by the $\sigma_k$-twisted Leibniz rule and since $\partial_k(b_i) =0$. This confirms that \eqref{ortho} holds also for $i\neq k$.
 \end{proof}
 
In the following, by saying  that two elements $r,s\in R$ are {\em coprime} we will mean that the ideals generated by them are coprime, i.e.\ that
$$
RrR + RsR =R.
$$
\begin{proposition}\label{prop.ortho}
Let $R(a,\varphi)$ be a generalized Weyl algebra and let $\sigma$, $\bar\sigma$ be automorphisms of $R$ commuting with $\varphi$ and fixing $a$. Let $\sigma_\mu$,  $\bar\sigma_{\bar\mu}$ be  their degree-counting extensions with respective coarsenss $\mu$ and  $\bar\mu$. Choose a positive integer $N$ such that $a$ is coprime with $\varphi^i(a)$, for all $i\in \{1,2,\ldots, 2N-1\}$, fix $m,n \in \{0,1,\ldots, N\}$ and consider the following data:
\begin{blist}
\item A skew derivation $(\alpha, \sigma\circ \varphi^{m+1})$ of $R$ such that
\begin{rlist}
\item $\alpha(a)$ is in the centre of $R$,
\item $\alpha(a)$ is coprime with  $\varphi^j(a)$, $j\in \{-m-1,-m, \ldots, 0, m+1,m+2,\ldots, 2m\}$ and with $\varphi^{-m}(\alpha(a))$,
\item $\alpha\circ \varphi = \varphi^{m+1}(\mu)\, \varphi\circ\alpha$.
\end{rlist}
\item A skew derivation $(\bar\alpha, \bar\sigma\circ \varphi^{-n-1})$ of $R$ such that
\begin{rlist}
\item $\bar\alpha(a)$ is in the centre of $R$,
\item $\varphi^{n+1}(\bar\alpha(a))$ is  coprime with $\varphi^j(a)$, $j\in \{-n-1,-n, \ldots, 0, n+1,n+2,\ldots, 2n\}$ and with $\varphi(\bar\alpha(a))$,
\item $\bar\alpha\circ \varphi = \varphi^{-n-1}(\bar\mu)\, \varphi\circ\bar\alpha$.
\end{rlist}
\end{blist}
Then the elementary skew derivations $(\partial,\sigma_\mu)$  and $(\bar\partial,\bar\sigma_{\bar\mu})$ of $R(a,\varphi)$ associated to $\alpha$, $\bar\alpha$ as in Theorem~\ref{thm.Weyl.deriv} form an orthogonal pair.
\end{proposition}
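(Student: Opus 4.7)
The plan is to invoke Lemma~\ref{lemma.ortho} with $b_1 = y$ and $b_2 = x$. From the formulas \eqref{monomial.m} and \eqref{monomial.-n} one reads off immediately that $\partial(x) = 0$ and $\bar\partial(y) = 0$, so the cross-annihilation conditions of the lemma are automatic. What remains is to show that each of $\partial(y) = \alpha(a)\,\varphi^m(\mu)\,x^m$ and $\bar\partial(x) = \varphi(\bar\alpha(a))\,y^n$ generates $R(a,\varphi)$ as a two-sided ideal.

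Focus first on the ideal $I$ generated by $\alpha(a) x^m$ (the central unit $\varphi^m(\mu)$ is inessential). The main move is to produce two central representatives of $I\cap R$ by sandwiching $\alpha(a) x^m$ with appropriate powers of $y$. Using the standard computations $x^m y^m = \prod_{j=1}^{m} \varphi^j(a)$ and $y^m x^m = \prod_{i=0}^{m-1} \varphi^{-i}(a)$, combined with the centrality of $\alpha(a)$ from (a.i), one obtains
$$
E_m := \alpha(a) x^m\,y^m = \alpha(a)\prod_{j=1}^{m} \varphi^j(a), \qquad E_0 := y^m\,\alpha(a) x^m = \varphi^{-m}(\alpha(a))\prod_{i=0}^{m-1}\varphi^{-i}(a).
$$
Since coprimality of central elements is preserved under passage to products of ideals, it suffices to verify pairwise coprimality between each factor of $E_m$ and each factor of $E_0$. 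Four classes of pairs arise: $\alpha(a)$ versus $\varphi^{-m}(\alpha(a))$; $\alpha(a)$ versus $\varphi^{-i}(a)$ for $i\in\{0,\ldots,m-1\}$; $\varphi^j(a)$ for $j\in\{1,\ldots,m\}$ versus $\varphi^{-m}(\alpha(a))$, which after shifting by $\varphi^m$ becomes $\alpha(a)$ versus $\varphi^{j+m}(a)$ with $j+m\in\{m+1,\ldots,2m\}$; and $\varphi^j(a)$ versus $\varphi^{-i}(a)$, which after shifting becomes $a$ versus $\varphi^{j+i}(a)$ with $j+i\in\{1,\ldots,2m-1\}$. Hypothesis (a.ii) accounts for the first three ranges, while the global coprimality of $a$ with $\varphi^k(a)$ for $k\in\{1,\ldots,2N-1\}$ accounts for the fourth. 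Consequently $RE_m + RE_0 = R$, hence $1 \in I$ and $I = R(a,\varphi)$.

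The argument for the ideal $\bar I$ generated by $\varphi(\bar\alpha(a))\,y^n$ runs in complete parallel: one forms
$$
\bar E_n := x^n\,\varphi(\bar\alpha(a)) y^n = \varphi^{n+1}(\bar\alpha(a))\prod_{j=1}^n \varphi^j(a), \qquad \bar E_0 := \varphi(\bar\alpha(a)) y^n\,x^n = \varphi(\bar\alpha(a))\prod_{i=0}^{n-1}\varphi^{-i}(a),
$$
and verifies pairwise coprimality of the factors using (b.i), (b.ii), and the global assumption on $a$; the coprimality of $\varphi^{n+1}(\bar\alpha(a))$ with $\varphi(\bar\alpha(a))$ is the new ingredient furnished by (b.ii). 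The main obstacle throughout is not any deep structural point but the careful tracking of $\varphi$-exponents needed to confirm that each required coprimality falls within the intervals stipulated by (a.ii), (b.ii), and the standing assumption on $a$. Once both $I$ and $\bar I$ equal $R(a,\varphi)$, Lemma~\ref{lemma.ortho} yields the orthogonality of $(\partial,\sigma_\mu)$ and $(\bar\partial,\bar\sigma_{\bar\mu})$.
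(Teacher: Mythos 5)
Your proposal is correct and follows essentially the same route as the paper: it invokes Lemma~\ref{lemma.ortho} with $b_1=y$, $b_2=x$, sandwiches $\partial(y)$ and $\bar\partial(x)$ with $y^m$ and $x^n$ to produce the same pairs of central elements of $R$, and checks that hypotheses (a.ii), (b.ii) and the standing coprimality assumption on $a$ cover exactly the required ranges of $\varphi$-exponents. The only (harmless) difference is that you package the final step as the general principle that pairwise coprime central elements have coprime products, whereas the paper carries out that induction explicitly via the chain of ideal identities \eqref{-i.j}--\eqref{alpha.phij}.
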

\begin{proof}
Explicitly, the elementary weight $m+1$ and $-n-1$ respectively skew derivations $\partial$, $\bar\partial$ are given by
\begin{subequations}\label{partials}
\begin{equation}\label{partial}
\partial(r) = \alpha (r) x^{m+1}, \qquad \partial(x) = 0, \qquad \partial(y) =  \alpha (a)\, x^{m}\mu\, ,
\end{equation}
\begin{equation}\label{bar.partial}
\bar\partial(r) = \bar\alpha (r) y^{n+1}, \qquad \bar\partial(x) = \varphi\left(\bar\alpha\left(a\right)\right) y^{n}, \qquad \bar\partial(y) = 0,
\end{equation}
\end{subequations}
for all $r\in R$, and then extended to the whole of $R(a,\varphi)$ by the twisted Leibniz rules. We will show that $\partial(y)$ and $\bar\partial(x)$ generate ideals both equal to $R(a,\varphi)$ and then use Lemma~\ref{lemma.ortho} to conclude that $(\partial,\sigma_\mu)$  and $(\bar\partial,\bar\sigma_{\bar\mu})$ form an orthogonal pair. Observe that, in view of \eqref{gW} and the centrality of $\alpha(a)$,
$$
y^m\partial(y) = \mu \varphi^{-m}\left(\alpha\left(a\right)\right) \varphi^{-m+1}\left(a\right)\cdots \varphi^{-1}(a) a,
$$
and
$$
\partial(y)y^m = \varphi^{m}(\mu) \alpha(a)\varphi(a)\varphi^2(a)\cdots \varphi^m(a).
$$
Hence the ideal generated by $\partial(y)$ is equal to the whole of $R(a,\varphi)$, provided
\begin{equation}\label{ideal}
R\varphi^{-m}\left(\alpha\left(a\right)\right) \varphi^{-m+1}\left(a\right)\cdots \varphi^{-1}(a) a + R\alpha(a)\varphi(a)\varphi^2(a)\cdots \varphi^m(a) = R.
\end{equation}
In a similar way,
$$
x^n\bar\partial(x) = \varphi^{n+1}\left(\bar\alpha\left(a\right)\right) \varphi^{n}\left(a\right)\cdots \varphi^{2}(a) \varphi(a),
$$
and
$$
\bar\partial(x)x^n =  \varphi\left(\bar\alpha(a)\right)a\varphi^{-1}(a)\cdots \varphi^{-n+1}(a).
$$
Hence the ideal generated by $\bar\partial(x)$ is equal to the whole of $R(a,\varphi)$, provided
\begin{equation}\label{ideal.bar}
R\varphi\left(\bar\alpha(a)\right)a\varphi^{-1}(a)\cdots \varphi^{-n+1}(a) + R\varphi^{n+1}\left(\bar\alpha\left(a\right)\right) \varphi^{n}\left(a\right)\cdots \varphi^{2}(a) \varphi(a) = R.
\end{equation}

Since $a$ is coprime with all $\varphi^i(a)$, $i\in \{1,2,\ldots, 2N-1\}$, $\varphi^{-i}(a)$ is coprime with $\varphi^{j}(a)$, for all $i \in \{0, 1, \ldots, N-1\}$ and $j\in \{1,2,\ldots , N\}$, and hence, 
$$
R =  Ra + R\varphi^j(a) = \left(R\varphi^{-1}(a) + R\varphi^j(a)\right) a + R\varphi^j(a)  = R\varphi^{-1}(a)a + R\varphi^j(a),
$$
where the last equality is a consequence of $R\varphi^j(a) a \subseteq R\varphi^j(a)$. Next,
\begin{eqnarray*}
 R &=&  R\varphi^{-1}(a)a + R\varphi^j(a) = \left(R\varphi^{-2}(a) + R\varphi^j(a)\right)\varphi^{-1}(a) a + R\varphi^j(a) \\
 & = & R\varphi^{-2}(a)\varphi^{-1}(a)a + R\varphi^j(a).
\end{eqnarray*}
Repeating this sufficiently many times, one concludes that
\begin{equation}\label{-i.j}
R = R\varphi^{-i}(a)\cdots \varphi^{-2}(a)\varphi^{-1}(a)a + R\varphi^j(a),
\end{equation}
for all $i \in \{0, 1, \ldots, N-1\}$ and $j\in \{1,2,\ldots , N\}$. Similarly, starting with
$R = Ra + R\alpha(a)$, and using that $\alpha(a)$ is coprime with $\varphi^{-m}(\alpha(a))$ and all $\varphi^{-i}(a)$, where $i \in \{0, 1, \ldots, m-1\}$, by the same arguments one obtains that
\begin{equation}\label{-m.alpha}
R = R\varphi^{-m}(\alpha(a))\varphi^{-m+1}(a)\cdots \varphi^{-2}(a)\varphi^{-1}(a)a + R\alpha(a).
\end{equation}
Since $\alpha(a)$ is coprime with $\varphi^j(a)$, for all $j\in \{m+1, \ldots, 2m\}$, $\varphi^{-m}(\alpha(a))$ is coprime with $\varphi^j(a)$, for all $j\in \{1, \ldots, m\}$. Bearing in mind that $m\leq N$, \eqref{-i.j} implies that
\begin{eqnarray}\label{alpha.phij}
R &=& \left(R\varphi^{-m}(\alpha(a)) +R\varphi^j(a)\right)  \varphi^{-i}(a)\cdots \varphi^{-2}(a)\varphi^{-1}(a)a + R \varphi^j(a)\nonumber\\
&=& R\varphi^{-m}(\alpha(a))\varphi^{-m+1}(a)\cdots \varphi^{-2}(a)\varphi^{-1}(a)a + R \varphi^j(a),
\end{eqnarray}
for all $j\in \{1, \ldots, m\}$.  Starting with \eqref{-m.alpha} and then repeatedly using \eqref{alpha.phij} we thus obtain
\begin{eqnarray*}
R &=& R\varphi^{-m}(\alpha(a))\varphi^{-m+1}(a)\cdots \varphi^{-2}(a)\varphi^{-1}(a)a + R\alpha(a)\\
&=& R\varphi^{-m}(\alpha(a))\varphi^{-m+1}(a)\cdots \varphi^{-2}(a)\varphi^{-1}(a)a \\
&&+ \left(R\varphi^{-m}(\alpha(a))\varphi^{-m+1}(a)\cdots \varphi^{-2}(a)\varphi^{-1}(a)a + R \varphi(a)\right)\alpha(a)\\
&=& R\varphi^{-m}(\alpha(a))\varphi^{-m+1}(a)\cdots \varphi^{-2}(a)\varphi^{-1}(a)a +  R \varphi(a)\alpha(a)\\
&=& R\varphi^{-m}(\alpha(a))\varphi^{-m+1}(a)\cdots \varphi^{-2}(a)\varphi^{-1}(a)a \\
&& +  \left(R\varphi^{-m}(\alpha(a))\varphi^{-m+1}(a)\cdots \varphi^{-2}(a)\varphi^{-1}(a)a + R \varphi^2(a)\right) \varphi(a)\alpha(a)\\
&=& R\varphi^{-m}(\alpha(a))\varphi^{-m+1}(a)\cdots \varphi^{-2}(a)\varphi^{-1}(a)a +  R \varphi^2(a)\varphi(a)\alpha(a)\\
&=& \ldots\\
&=& R\varphi^{-m}(\alpha(a))\varphi^{-m+1}(a)\cdots \varphi^{-2}(a)\varphi^{-1}(a)a +  R\varphi^m(a)\cdots  \varphi^2(a)\varphi(a)\alpha(a),
\end{eqnarray*}
i.e.\ the required equation \eqref{ideal}. Replacing $m$ by $n$ and $\alpha(a)$ by $\varphi^{n+1}(\bar\alpha(a))$ in the above arguments, one finds that also \eqref{ideal.bar} holds. Now Lemma~\ref{lemma.ortho} implies that $(\partial,\sigma_\mu)$ and  $(\bar\partial,\sigma_{\bar\mu})$ form an orthogonal pair.
\end{proof}

\begin{remark}\label{rem.ortho}
Note that since, for all central elements $r,s$ of $R$, $Rrs \subseteq Rr,\, Rs$, equality \eqref{ideal} implies hypothesis (a)(ii), while \eqref{ideal.bar} implies hypothesis (b)(ii) in Proposition~\ref{prop.ortho}.
\end{remark}

\section{skew derivations of the quantum disc and quantum plane algebras}\label{sec.disc}\setcounter{equation}{0}
In this section we apply the results of Section~\ref{sec.skew} to generalized Weyl algebras over a polynomial ring in one variable  associated to  linear polynomials and known as the {\em quantum disc algebra} and the {\em quantum plane} or the {\em quantum polynomial ring in two variables}. 

Let $\KK$ be a field and $q$ a non-zero element  of $\KK$. The {\em coordinate algebra of the quantum disc} $D_q(x,y)$ or the {\em quantum disc algebra} is a $\KK$-algebra generated by $x,y$ and the relation
\begin{equation}\label{q.disc}
xy - q\,yx =1-q.
\end{equation}
The {\em quantum polynomial ring in two variables} or the {\em quantum plane algebra} is a $\KK$-algebra $\KK_q[x,y]$ generated by $x,y$ and the relation
\begin{equation}\label{q.plane}
xy =  q\,yx.
\end{equation}
Both algebras have $\KK$-linear bases  given by all monomials $y^mx^n$. 
$D_q(x,y)$ and $\KK_q[x,y]$ are two of the simplest examples of a generalized Weyl algebra. Consider the following automorphism of the polynomial algebra in one variable,
\begin{equation}\label{auto.poly}
\varphi: \KK[h]\to \KK[h], \qquad f(h) \mapsto f(qh).
\end{equation}
Then 
\begin{equation}\label{disc.Weyl}
D_q(x,y) = \KK[h](1-h,\varphi) \quad \mbox{and} \quad \KK_q[x,y] = \KK[h](h,\varphi).
\end{equation}
Non-zero scalar multiples of the identity are the only units of the polynomial algebra $\KK[h]$. We choose such a multiple $\mu$. Since an automorphism $\sigma$ considered in Lemma~\ref{lemma.auto} should satisfy $\sigma(1-h) =1-h$, in the case of $D_q(x,y)$ or $\sigma(h) =h$ in the $\KK_q[x,y]$-case, $\sigma$ must be the identity automorphism. Thus $\sigma_\mu$ is fully determined by \eqref{sigma.mu}.
\begin{remark}\label{rem.linear}
Note that, up to isomorphism, $D_q(x,y)$ and $\KK_q[x,y]$ are the only two generalized Weyl algebras over $\KK[h]$ corresponding to the automorphism \eqref{auto.poly} and a linear polynomial. Indeed, the relations
$$
yx = \alpha +\beta\, h, \qquad xy = \alpha +q\beta\, h, \qquad \beta\neq 0,
$$
yield
$$
xy - qyx = (1-q)\alpha.
$$
If $\alpha =0$ we obtain $\KK_q[x,y]$, while if $\alpha\neq 0$, by rescaling the generators we obtain $D_q(x,y)$.
\end{remark}

The following lemma proves useful in calculations involving the basis  $y^mx^n$ of $D_q(x,y)$.
\begin{lemma}\label{lemma.disc}
For all $n\geq 1$,  the generators $x,y$ of the quantum disc algebra  $D_q(x,y)$ satisfy the following equalities: 
 \begin{equation}\label{xyn}
 x y^n -q^ny^nx = (1-q^n)y^{n-1}, \qquad x^n y -q^nyx^n = (1-q^n)x^{n-1}. 
 \end{equation}
 \end{lemma}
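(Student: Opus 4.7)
The plan is to prove both identities by induction on $n$, with the base case $n=1$ being precisely the defining relation \eqref{q.disc} of $D_q(x,y)$.

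For the first identity, assume inductively that $xy^n - q^n y^n x = (1-q^n) y^{n-1}$. Then I would compute $xy^{n+1}$ by writing it as $(xy^n)y$, applying the inductive hypothesis to replace $xy^n$ by $q^n y^n x + (1-q^n) y^{n-1}$, then using \eqref{q.disc} in the form $xy = qyx + (1-q)$ to move the remaining $x$ past the trailing $y$. Collecting the resulting scalars yields $q^n(1-q) + (1-q^n) = 1 - q^{n+1}$, which gives the inductive step.

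The second identity can be handled in exactly the same fashion, by induction on $n$: write $x^{n+1}y = x(x^n y)$, use the inductive hypothesis, and then apply \eqref{q.disc} once to move one $x$ past the single $y$. Alternatively, one could observe that the defining relation \eqref{q.disc} is invariant under the anti-automorphism swapping $x$ and $y$, so the second identity is the image of the first under this symmetry; but the direct induction is shorter.

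There is no real obstacle here—the computation is entirely mechanical and amounts to verifying that $q^n(1-q) + (1-q^n) = 1 - q^{n+1}$ at each inductive step. The only mild care needed is with the order of the factors, since $D_q(x,y)$ is noncommutative, but because $q\in \KK$ is central this is straightforward.
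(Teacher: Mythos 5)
Your proposal is correct and follows one of the two routes the paper itself indicates (the paper's proof simply states that the lemma ``can be proven from relations \eqref{q.disc} by induction'', which is exactly the induction you carry out, with the scalar identity $q^n(1-q)+(1-q^n)=1-q^{n+1}$ verifying each step). Both your inductive computations and the alternative observation about the $x$--$y$ symmetry are sound.
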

 \begin{proof}
This follows by the identification of $D_q(x,y)$ as a generalized Weyl algebra in \eqref{disc.Weyl} or can be proven from relations \eqref{q.disc} by induction.
 \end{proof}

\begin{proposition}\label{prop.disc.one}
Assume that a non-zero $q\in \KK$ is not a root of unity, and let $A$ be either
\begin{blist}
\item the disc algebra $D_q(x,y)$ or
\item the quantum polynomial ring $\KK_q[x,y]$.
\end{blist}
 Set $h = 1-yx$ if $A=D_q(x,y)$ or $h=yx$ if $A=\KK_q[x,y]$, and let $\mu$ be a  non-zero element of $ \KK$. 
\begin{zlist}
\item For all $f(h) \in \KK[h]$,  the map $\partial$ on generators of $A$ given by
\begin{equation}\label{disc.deriv.zero}
\partial(x) = f(h) x, \qquad \partial(y) = - \mu f(q^{-1}h)y,
\end{equation}
extends to a skew derivation $(\partial, \sigma_\mu)$ of $A$. These are the only $\sigma_\mu$-derivations such that $\partial(h) = 0$.
\item
If there exists $d\in \NN$ such that 
\begin{equation}\label{mu.q}
\mu = q^{-d +1},
\end{equation}
then, for all $a(x) \in \KK[x]$ and $b(y) \in \KK[y]$, the map given by
\begin{equation}\label{disc.deriv.d}
\partial(x) = h^d\, b(y), \qquad \partial(y) = h^d\, a(x),
\end{equation}
extends to a skew derivation $(\partial, \sigma_\mu)$ of $A$.
\end{zlist}

The (combinations of the) above maps exhaust all $\sigma_\mu$-skew derivations of  $A$ contained in Theorem~\ref{thm.Weyl.deriv}.
\end{proposition}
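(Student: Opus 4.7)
The plan is to specialise Theorem~\ref{thm.Weyl.deriv} to $R=\KK[h]$ with $\varphi\colon h\mapsto qh$ and $a\in\{h,1-h\}$, and then read off which twisting data can actually occur. Since any $\KK$-algebra automorphism of $\KK[h]$ has the form $h\mapsto\lambda h+\nu$, commuting with $\varphi$ forces $\nu=0$ and fixing $a$ forces $\lambda=1$; hence $\sigma=\id$, $R_\sigma^R=\KK[h]$, and $\sigma_\mu$ is entirely described by the scalar $\mu$. Every Theorem~\ref{thm.Weyl.deriv} derivation then depends on data $(\alpha_i,b,c)$ with $b,c\in\KK[h]$ and $\alpha_i$ a $\varphi^i$-skew derivation of $\KK[h]$. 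Such an $\alpha_i$ is determined by $\alpha_i(h)$; writing $\alpha_i(h)=\sum_j c_j^{(i)} h^j$, the compatibility \eqref{q-deriv} rewrites as $c_j^{(i)}(q-\mu q^j)=0$ for every $j$. Since $q$ is not a root of unity, this forces $\alpha_i=0$ unless $\mu=q^{1-d}$ for a unique $d\in\NN$, in which case $\alpha_i(h)=c_i h^d$ for some scalar $c_i$. The divisibility $a\mid\alpha_0(a)$ additionally kills $c_0$ in the quantum disc case, and in the quantum plane case with $d=0$.

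For part~(1), apply \eqref{delta.r} with $r=h$: commutativity of $\KK[h]$ kills $bh-hb$, so $\partial(h)=0$ reduces to $\sum_{m\geq 0}\alpha_m(h)x^m+\sum_{n\geq 1}\alpha_{-n}(h)y^n=0$ in $A$. Reading off in the PBW basis $\{y^m x^n\}$ forces each $\alpha_i(h)=0$ and hence each $\alpha_i=0$. The remaining $(b,c)$-freedom then produces a one-parameter family indexed by $f(h):=c-\varphi(b)+\mu^{-1}b\in\KK[h]$: a short rearrangement using $\varphi^{-1}(f)(h)=f(q^{-1}h)$ shows that \eqref{delta.x} and \eqref{delta.y} simplify to \eqref{disc.deriv.zero}, and every $f\in\KK[h]$ is realised by choosing $b=0$, $c=f$. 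For the converse, a direct check against the relations $xy-\varphi(a)=0$ and $yx-a=0$, using $xf(h)=f(qh)x$ and $yf(h)=f(q^{-1}h)y$, confirms that \eqref{disc.deriv.zero} extends to a skew derivation.

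For part~(2), with $\mu=q^{-d+1}$ I set $b=c=0$, $\alpha_0=0$ and $\alpha_i(h)=c_i h^d$ for $i\neq 0$; substituting into \eqref{delta.x} and \eqref{delta.y} and collapsing the sums recovers $\partial(x)=h^d\,b(y)$ and $\partial(y)=h^d\,a(x)$ for polynomials $a(x),b(y)$ with coefficients prescribed by the $c_i$, which therefore sweep through all of $\KK[x]\times\KK[y]$. The exhaustion claim is then obtained by splitting an arbitrary Theorem~\ref{thm.Weyl.deriv} derivation as $\partial=\partial^{(1)}+\partial^{(2)}$, with $\partial^{(1)}$ collecting the $(b,c)$-data (type~(1) by the argument above) and $\partial^{(2)}$ collecting the $\alpha_i$-data with $b=c=0$ (type~(2)). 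The main obstacle is the quantum plane case with $d\geq 1$, where a non-zero $\alpha_0$ is a priori admissible: one has to match the Leibniz-forced shape of $\partial^{(2)}(h)$, namely $h^d(\mu^{-1}xa(x)+q^{-d}yb(y))$, against the $\alpha_0$-contribution $c_0h^d$ and argue that, after absorbing degrees into the remaining parameters, no derivation outside the span of type~(1) and type~(2) survives.
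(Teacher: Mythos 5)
Your overall strategy---specialise Theorem~\ref{thm.Weyl.deriv} to $R=\KK[h]$, observe that $\sigma=\id$, and classify the admissible $\alpha_i$ through the constraint $c_j^{(i)}(q-\mu q^j)=0$---is essentially the paper's own (the paper phrases the $\alpha_i$ as multiples of Jackson derivatives, but the computation is identical), and your treatment of part~(2) and of the realisation of \eqref{disc.deriv.zero} inside the Theorem~\ref{thm.Weyl.deriv} family is correct. There are, however, two genuine gaps. The first concerns the uniqueness claim in part~(1): ``these are the only $\sigma_\mu$-derivations such that $\partial(h)=0$'' is \emph{not} restricted to derivations of the Theorem~\ref{thm.Weyl.deriv} type --- contrast it with the final sentence of the proposition, where the qualifier ``contained in Theorem~\ref{thm.Weyl.deriv}'' is added explicitly. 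A $\sigma_\mu$-derivation with $\partial(h)=0$ vanishes on all of $\KK[h]$, but its values $\partial(x),\partial(y)$ are a priori arbitrary elements of $A$. Accordingly, the paper proves uniqueness by a completely general ansatz $\partial(x)=\sum\alpha_{m\,n}y^mx^n$, $\partial(y)=\sum\beta_{m\,n}y^mx^n$ and a coefficient comparison against $\partial(yx)=0$ and $\partial(xy)=0$, which in the disc case requires Lemma~\ref{lemma.disc} and an argument using the finiteness of the index ranges to force $\alpha_{m-1\,n}=0$ for $m\neq n$. Your argument, which reads $\partial(h)=0$ off formula \eqref{delta.r}, only establishes uniqueness \emph{within} the Theorem~\ref{thm.Weyl.deriv} family, which is a strictly weaker statement; the substantive half of the paper's proof is missing from your proposal.

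The second gap is the one you yourself flag as ``the main obstacle'' and then defer: the quantum-plane case with $\mu=q^{1-d}$, $d\geq1$, and $\alpha_0\neq0$. Saying that one has to ``argue that \dots no derivation outside the span of type~(1) and type~(2) survives'' is not a proof, and the step is not routine. Indeed, the datum $\alpha_0(h)=c_0h^d$ with $b=c=0$ is admissible (it satisfies \eqref{q-deriv} since $h\mid c_0h^d$ for $d\geq1$) and by \eqref{monomial.0} yields $\partial(x)=0$ and $\partial(y)=\mu c_0h^{d-1}y$. The latter lies in the negative part of $\KK_q[x,y]$, so it cannot be matched by $h^d a(x)$ from type~(2), while matching it by a type~(1) map forces a non-zero $\partial(x)$; hence for $c_0\neq 0$ this derivation is not a combination of the maps \eqref{disc.deriv.zero} and \eqref{disc.deriv.d}. (The paper's own proof is silent on $\partial_0$ in its case~(ii), so this point deserves care rather than deferral.) As it stands, the step you postpone is exactly the one that requires an argument, and the argument you gesture at does not go through in the form you describe.
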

\begin{proof}
We study all possible $\sigma_\mu$-skew derivations of  $A$ that satisfy assumptions of  Theorem~\ref{thm.Weyl.deriv}. Since in our case $\sigma$ is the identity map, we first determine $\varphi^n$-skew derivations of the polynomial algebra $\KK[h]$. The action of $\varphi^n$ on any element of $\KK[h]$ results in rescaling the $h$ by $q^n$. Thus any $\varphi^n$-skew derivation $\partial_n$ of $\KK[h]$ takes the form of the multiple of an appropriate Jackson's derivative (understood as the ordinary derivative in case $n=0$),
$$
\partial_n (f(h)) = a_n(h) f'_{q^n}(h) :=  a_n(h) \frac{f\left(q^n\, h\right) - f(h)}{\left(q^n  -1\right)h}.
$$
Requesting that $\partial_n \circ \varphi = \mu\, \varphi \circ\partial_n$, and evaluating it at $h$, yields the constraint
\begin{equation}\label{an}
qa_n(h) = \mu\, a_n(qh),
\end{equation}
which has the following solutions: either 
\begin{rlist}
\item $a_n(h) =0$ and there are no restrictions on $\mu$, or else 
\item
there exists $d\in \NN$ such that $\mu = q^{-d+1}$ (see \eqref{mu.q}) and then $a_n(h)$ is a scalar multiple of $h^d$.
\end{rlist}
These skew derivations provide us with only choices  of maps $\alpha_i$ in Theorem~\ref{thm.Weyl.deriv}. We now look at the derivations listed in equations \eqref{monomials} in the proof of Theorem~\ref{thm.Weyl.deriv}. In the case (i), all derivations $\partial_m$ and $\partial_n$ are trivial, and we are thus left with $\partial_0$,
$$
\partial_0(x) = f(h) x, \qquad \partial_0(y) = -\mu\, \varphi^{-1}(f(h))y = -\mu\, f(q^{-1}h)y.
$$
This proves the first part of statement (1). In the case (ii) we obtain
$$
\partial_m(x) = 0,\quad  \partial_m(y) \sim  h^d x^{m-1}, \qquad  \partial_{-n}(x)  \sim h^d y^{n-1},\quad  \partial_{-n}(y) = 0.
$$
Combining these solutions we obtain  statement (2). 

Clearly, derivations \eqref{disc.deriv.zero} have the property $\partial(h) =0$.  To prove their uniqueness 
we make a general ansatz 
$$
\partial(x) = \sum _{m=0}^M \sum_{n=0}^N \alpha_{m\, n} y^m x^n, 
\qquad 
\partial(y) = \sum _{m=0}^M \sum_{n=0}^N \beta_{m\, n} y^m x^n, 
$$
and then consider the cases $\KK_q[x,y]$ and $D_q(x,y)$ separately. In the former case $\partial(h) = \partial(yx)=0$ implies
$$
\mu^{-1}\sum _{m=0}^M \sum_{n=0}^N \beta_{m\, n} y^m x^{n+1} +  \sum _{m=0}^M \sum_{n=0}^N \alpha_{m\, n} y^{m+1} x^n =0,
$$
i.e.\
\begin{equation}\label{q.plane.yx}
\beta_{m\, n-1} = -\mu\, a_{m-1\, n}.
\end{equation}
On the other hand $q\partial(h) = \partial(xy) = 0$ implies
$$
\mu \sum _{m=0}^M \sum_{n=0}^N q^n\alpha_{m\, n} y^{m+1} x^n + \sum _{m=0}^M \sum_{n=0}^N \beta_{m\, n}q^m y^m x^{n+1}   =0,
$$
i.e.,
\begin{equation}\label{q.plane.xy}
\beta_{m\, n-1} = -\mu\, q^{n-m} a_{m-1\, n}.
\end{equation}
Putting \eqref{q.plane.yx} and \eqref{q.plane.xy} together we thus conclude that the only non-zero coefficients  $\alpha_{m\, n}$ are these of the form $\alpha_{m-1\, m}$, and thus we are lead to the solution \eqref{disc.deriv.zero}.

In the case of the quantum disc algebra, the vanishing of $\partial$ on $yx$ implies \eqref{q.plane.yx}. On the other hand $\partial(xy) =0$ implies
\begin{eqnarray*}
0 &=& \mu \sum _{m=0}^M \sum_{n=0}^N \alpha_{m\, n} y^{m} x^n y+ \sum _{m=0}^M \sum_{n=0}^N \beta_{m\, n} x y^m x^{n}\\
&=& \mu \left(  \sum _{m=0}^M \sum_{n=0}^N \alpha_{m\, n} y^{m} x^n y- \sum _{m=0}^M \sum_{n=0}^N \alpha_{m-1\, n+1} x y^m x^{n}\right),
\end{eqnarray*}
where \eqref{q.plane.yx} has been used. At this point we can use Lemma~\ref{lemma.disc} and compare coefficients at monomials $y^mx^n$ to obtain
\begin{equation}\label{disc.xy}
\left(q^n - q^m\right) \left(\alpha_{m-1\, n}  - q\, \alpha_{m\, n+1}\right) =0.
\end{equation}
For $n=m$  this equation is obviously satisfied with no constraint on $\alpha_{n\, n+1}$. However, when $m\neq n$, equation \eqref{disc.xy} is equivalent to
\begin{equation}\label{disc.xy.2}
\alpha_{m-1\, n}  = q\, \alpha_{m\, n+1}.
\end{equation}
Bearing in mind that indices $m$ and $n$ have finite ranges, the only solution to \eqref{disc.xy.2} is  $\alpha_{m-1\, n} =0$, for all $m\neq n$. As was the case with the quantum plane algebra, we are left with a skew derivation of the type listed in assertion (1).

The final assertion follows from the necessity of solutions to constraints arising from assumptions of Theorem~\ref{thm.Weyl.deriv}.
\end{proof}

\begin{example}\label{example.disc.plane.ortho} In this example we apply Proposition~\ref{prop.ortho} to discuss orthogonal pairs of skew derivations on the quantum disc and polynomial algebras. Note that Proposition~\ref{prop.ortho} is applicable only to elementary skew derivations leading to  derivations of the type \eqref{disc.deriv.d}. Since in this case the skew derivations $\alpha_i$ on $\KK[h]$ evaluated at $h$ (in the case of the quantum plane) or $1-h$ (in the case of the disc) are proportional to $h^d$, the co-primeness requirements of Proposition~\ref{prop.ortho} immediately imply that $d=0$, and hence $\mu=q$.
Hence the elementary skew derivations $\partial_1$, $\partial_2$ can be given by 
\begin{subequations}\label{partials.disc.plane.ortho}
\begin{equation}\label{partial.disc.plane.ortho}
\partial(x) = 0, \qquad \partial (y) = c \, x^{m},
\end{equation}
\begin{equation}\label{bar.partial.disc.plane.ortho}
\bar\partial(x) = \bar{c} y^{n}, \qquad \bar\partial(y) = 0,
\end{equation}
\end{subequations}
for all   $m,n\in \NN$, and  non-zero elements $c,\bar{c}$ of $\KK$. Henceforth we need to consider the quantum plane and quantum polynomial ring cases separately. 
\begin{rlist}
\item In the $\KK_q[x,y]$-case, $a=h$, hence it is never coprime with $\varphi^i(a) = q^i h$, and thus only $m=n=0$ in \eqref{partials.disc.plane.ortho} gives an orthogonal pair. 
\item In the $D_q(x,y)$-case, $a= 1-h$, hence $\varphi^i(a) = 1-q^i h$ is always coprime with $a$ as long as $q$ is not a root of unity (which is assumed in Proposition~\ref{prop.disc.one} and hence in this example). Thus $(\partial, \sigma_q)$, $(\bar\partial, \sigma_q)$ form an orthogonal pair for any choice of $m$ and $n$.
\end{rlist}
\end{example}

The following proposition lists all $\sigma_\mu$- skew derivations of $D_q(x,y)$ and $\KK_q[x,y]$ in the case $\mu =q$.

\begin{proposition}\label{prop.disc.mu=q}
Assume that a non-zero $q\in \KK$ is not a root of unity, and let $A$ be either
\begin{blist}
\item the disc algebra $D_q(x,y)$ or
\item the quantum polynomial ring $\KK_q[x,y]$.
\end{blist}
For all natural numbers $M, N$ and $\alpha_{m\, n}  \in \KK$, $m=0,\ldots M-1$, $n=1,\ldots N$, and 
polynomials $f(x) \in \KK[x]$, $g(y)\in \KK[y]$, 
 the maps defined by
\begin{subequations}\label{disc.full}
\begin{equation}\label{disc.full.x}
\partial(x) = g(y) +  \sum _{m=0}^{M-1} \sum_{n=1}^N \alpha_{m\, n} y^m x^n, 
\end{equation}
\begin{equation}\label{disc.full.y}
\partial(y) =f(x) - q \sum _{m=1}^M \sum_{n=0}^{N-1} \frac{[n+1]_q}{[m]_q} \alpha_{m-1\, n+1}\, y^m x^n,
\end{equation}
\end{subequations}
where the integer in square brackets denotes the $q$-integer
$$
[m]_q = \frac{q^m -1}{q-1},
$$ 
extend to a skew derivation $(\partial, \sigma_q)$ of $A$.

Any $\sigma_q$-skew derivation of $A$ is  of the form \eqref{disc.full}.
\end{proposition}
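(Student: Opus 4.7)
My plan is to reduce both parts of Proposition~\ref{prop.disc.mu=q}, existence and classification, to the single linear condition that $\partial$ annihilate the defining relation $xy - qyx - c$, where $c = 1-q$ for $D_q(x,y)$ and $c = 0$ for $\KK_q[x,y]$. Because $c \in \KK$ is killed by any derivation, a choice of $\partial(x)$, $\partial(y)$ extended to $\KK\langle x,y\rangle$ by the $\sigma_q$-twisted Leibniz rule descends to $A$ if and only if it vanishes on $xy - qyx$. Using $\sigma_q(x) = q^{-1}x$ and $\sigma_q(y) = qy$ from \eqref{sigma.mu}, this compatibility condition reads
\begin{equation*}
q\partial(x)y - qy\partial(x) + x\partial(y) - \partial(y)x = 0.
\end{equation*}

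Writing $\partial(x) = \sum_{m,n\geq 0} \alpha_{mn}\, y^m x^n$ and $\partial(y) = \sum_{m,n\geq 0} \beta_{mn}\, y^m x^n$ in the basis $\{y^m x^n\}$, I would expand the expression above back into the basis by means of the commutation rules $xy^m = q^m y^m x$ and $x^n y = q^n y x^n$ in the quantum plane, and Lemma~\ref{lemma.disc} in the quantum disc. Extracting the coefficient of $y^M x^N$ yields the four-term recurrence
\begin{equation*}
q(q^N - 1)\alpha_{M-1,N} - q(q^{N+1} - 1)\alpha_{M,N+1} + (q^M - 1)\beta_{M,N-1} - (q^{M+1} - 1)\beta_{M+1,N} = 0,
\end{equation*}
with the convention that terms with negative indices are zero. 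The boundary cases $M = 0$ and $N = 0$ collapse two of the four contributions via the factors $q^M - 1$ and $q^N - 1$, producing the base identities $\beta_{1,N} = -q\,[N+1]_q/[1]_q\,\alpha_{0,N+1}$ and $\beta_{M+1,0} = -q\,[1]_q/[M+1]_q\,\alpha_{M,1}$. An induction on $M$ then propagates these to
\begin{equation*}
\beta_{m,n} = -q\,\frac{[n+1]_q}{[m]_q}\,\alpha_{m-1,n+1}, \qquad m \geq 1,\ n \geq 0,
\end{equation*}
the inductive step being a direct substitution in which the induction hypothesis for $\beta_{M,N-1}$ causes the $\alpha_{M-1,N}$ term to cancel and isolates $\beta_{M+1,N}$. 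The hypothesis that $q$ is not a root of unity is used precisely to guarantee $[m]_q \neq 0$ for all $m \geq 1$, so the recurrence is invertible. The remaining coefficients $\alpha_{m,0}$ and $\beta_{0,n}$ are entirely unconstrained and assemble into the polynomials $g(y) = \sum_m \alpha_{m,0}\, y^m$ and $f(x) = \sum_n \beta_{0,n}\, x^n$ appearing in \eqref{disc.full}. This simultaneously proves existence (any choice of data yields a skew derivation) and uniqueness (any skew derivation is of this form).

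The main technical obstacle is the combinatorics of the disc case: Lemma~\ref{lemma.disc} forces each of the four products $y^m x^n \cdot y$ and $x \cdot y^m x^n$ arising in the expansion to split into two basis terms of different bidegree, so one must carefully tabulate which pairs $(m,n)$ from the $\alpha$-sum and the $\beta$-sum contribute to a fixed $y^M x^N$, and handle the boundary cases separately to supply the base of the induction. The quantum plane case is strictly simpler, since these lower-order corrections vanish and only the leading terms of each product contribute; the same argument applies verbatim with Lemma~\ref{lemma.disc} replaced by the plain $q$-commutation rules.
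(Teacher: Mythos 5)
Your proposal is correct and follows essentially the same route as the paper: a general ansatz $\partial(x)=\sum\alpha_{mn}y^mx^n$, $\partial(y)=\sum\beta_{mn}y^mx^n$, reduction to $\partial(xy)=q\partial(yx)$, expansion via Lemma~\ref{lemma.disc} (resp.\ the plain $q$-commutation rules for $\KK_q[x,y]$), and the resulting four-term coefficient recurrence whose solution is $\beta_{m,n}=-q\,[n+1]_q\alpha_{m-1,n+1}/[m]_q$ with $\alpha_{m,0}$ and $\beta_{0,n}$ free. The only difference is presentational: you organize the solution of the linear system as an explicit induction on $m$ anchored at the collapsed boundary equations, where the paper simply exhibits the general solution and verifies it.
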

\begin{proof}
First we consider the case $A=D_q(x,y)$. To prove that $(\partial, \sigma_q)$ extends to a skew derivation of $D_q(x,y)$, we only need to make sure that the definition of $\partial$, while being extended to the whole of  $D_q(x,y)$ remains compatible with the relation \eqref{q.disc}. Note that the twisted Leibniz rule implies that $\partial(q-1) = 0$, hence this amounts to checking
\begin{equation}\label{qxy}
 \partial(xy) = q\partial(yx).
\end{equation}

We start with a general ansatz 
\begin{equation}\label{ansatz}
\partial(x) = \sum _{m=0}^M \sum_{n=0}^N \alpha_{m\, n} y^m x^n, 
\qquad 
\partial(y) = \sum _{m=0}^M \sum_{n=0}^N \beta_{m\, n} y^m x^n, 
\end{equation}
and, with the help of Lemma~\ref{lemma.disc}, compute,
\begin{eqnarray*}
\partial(xy)&=& q\partial(x) y + x\partial(y)  = q\sum _{m=0}^M \sum_{n=0}^N \alpha_{m\, n} y^m x^n  y + x (\sum _{m=0}^M \sum_{n=0}^N \beta_{m\, n} y^m x^n)\\
&=&  \sum _{m=0}^M \sum_{n=0}^N \alpha_{m\, n} q^{n+1}y^{m+1}x^{n}+ q \sum _{m=0}^M \sum_{n=0}^N \alpha_{m\, n}  (1-q^{n})y^{m}x^{n-1}\\
&& + \sum _{m=0}^M \sum_{n=0}^N \beta_{m\, n} q^{m}y^{m}x^{n+1}+\sum _{m=0}^M \sum_{n=0}^N \beta_{m\, n} (1-q^{m})y^{m-1}x^{n}.
\end{eqnarray*}
On the other hand, 
\begin{eqnarray*}
q\partial(yx)&=&\partial(y) x + qy\partial(x) = (\sum _{m=0}^M \sum_{n=0}^N \beta_{m\, n} y^m x^n)x+ q y (\sum _{m=0}^M \sum_{n=0}^N \alpha_{m\, n} y^m x^n)\\
&=& \sum _{m=0}^M \sum_{n=0}^N \beta_{m\, n}y^m x^{n+1}+q\sum _{m=0}^M \sum_{n=0}^N \alpha_{m\, n} y^{m+1} x^{n}.
\end{eqnarray*}
Thus \eqref{qxy} is equivalent to:
\begin{eqnarray*}
\sum _{m=0}^M \sum_{n=0}^N \alpha_{m\, n}(q^{n+1}-q) y^{m+1}x^{n}+\sum _{m=0}^M \sum_{n=0}^N \alpha_{m\, n} q(1-q^{n})y^{m}x^{n-1}\\
+\sum _{m=0}^M \sum_{n=0}^N \beta_{m\, n}(q^{m}-1)y^{m}x^{n+1}
+\sum _{m=0}^M \sum_{n=0}^N \beta_{m\, n}(1-q^{m})y^{m-1}x^{n}=0
\end{eqnarray*}
Looking at the highest powers of $x$ and $y$, we thus conclude that
\begin{equation}\label{abMN}
\beta_{m\, N} =  \alpha_{M n} = 0, \qquad \mbox{for all $m,n \neq 0$}.
\end{equation}
Comparing the coefficients of  $y^{m}x^{n}$, with understanding that $\alpha_{m\, n} = \beta_{m\, n} =0$ whenever $m\not\in \{0,\ldots, M\}$ or $n\not\in \{0,\ldots , N\}$, we obtain the following constraints for the coefficients of $\partial(x)$ and $\partial(y)$,
\begin{equation}\label{ab.constraint}
\beta_{m\,n-1}(q^{m}-1)+\beta_{m+1\,n}(1-q^{m+1})= q\left(\alpha_{m-1\,n}(1 -q^{n})+\alpha_{m\,n+1}(q^{n+1}-1)\right).  
\end{equation}
These have the general solution, 
for all $n = 0,\ldots, N-1$, $m=0,\ldots  ,M-1$,
\begin{equation}\label{ab.gen}
\beta_{m+1\,n}= -q \dfrac{q^{n+1}-1}{q^{m+1}-1}\alpha_{m\,n+1} = -q\frac{[n+1]_q}{[m+1]_q}\alpha_{m\,n+1}.
\end{equation}
Therefore, by combining solutions \eqref{abMN} and \eqref{ab.gen}  we obtain that necessarily
$$
\partial(x) = \sum_{m=0}^M\alpha_{m\, 0}\, y^m +  \sum _{m=0}^{M-1} \sum_{n=1}^N \alpha_{m\, n} y^m x^n, 
$$
and
$$
\partial(y) = \sum_{n=0}^N\beta_{0\, n} \, x^n - q \sum _{m=1}^M \sum_{n=0}^{N-1} \frac{[n+1]_q}{[m]_q} \alpha_{m-1\, n+1}\, y^m x^n. 
$$
Since the compatibility of the $\sigma_q$-skew derivation property of $\partial$ with the relation \eqref{q.disc} is checked, $(\partial, \sigma_q)$ extends to a skew derivation of $D_q(x,y)$.

Also in the case $A=\KK_q[x,y]$ we make the ansatz \eqref{ansatz} and  derive the conditions that coefficients $\alpha_{m\, n}$, $\beta_{m\, n}$ must satisfy in order for \eqref{qxy} to be fulfilled. Using the quantum plane relation \eqref{q.plane} one easily finds that  \eqref{qxy} is equivalent to
\begin{equation}\label{constraint.q.plane}
\sum_{m=0}^M\sum_{n=0}^N (1-q^m)\beta_{m\, n} y^mx^{n+1} = q\sum_{m=0}^M\sum_{n=0}^N (q^n-1)\alpha_{m\, n} y^{m+1}x^{n}. 
\end{equation}
Studying the terms at $y^{M+1}$ and $x^{N+1}$ one concludes that $\beta_{m\, N} =  \alpha_{M\, n} = 0$, unless $m=0$ or $n=0$, and then comparing coefficients at $y^mx^n$ one immediately derives \eqref{ab.gen}. This leads to the stated assertion and thus completes the proof.
\end{proof}

\begin{remark}\label{rem.disc.full}
The skew derivation \eqref{disc.deriv.zero} is obtained from \eqref{disc.full} by setting $\alpha = \beta =0$, $N=M+1$ and $\alpha_{m\, n} =0$ if $n \neq m+1$.  

The skew derivation \eqref{disc.deriv.d} is obtained from \eqref{disc.full} by setting  $\alpha_{m\, n}=0$. 
\end{remark}

Finally, we construct orthogonal pairs of skew derivations on the quantum disc algebra from those listed in Proposition~\ref{prop.disc.mu=q} and not already included in Example~\ref{example.disc.plane.ortho}.

\begin{proposition}\label{prop.ortho.disc}
Assume that a non-zero $q\in \KK$ is not a root of unity. For any  $n,m\in \NN$ such that $m,n>{1}$ and for any non-zero $c,\bar{c} \in \KK$,
consider the following pair of $\sigma_q$-skew derivations on $D_q(x,y)$, defined on the generators by
\begin{subequations}\label{partials.disc.ortho}
\begin{equation}\label{partial.disc.ortho}
\partial(x) = c x^n, \qquad \partial (y) = -q[n]_qc \, (1-h)x^{n-2}.
\end{equation}
\begin{equation}\label{bar.partial.disc.ortho}
\bar\partial(x) = -q^{-1}[m]_q\bar{c} (1-q^{-m+2}h)y^{m-2}, \qquad \bar\partial(y) = \bar{c}  y^m.
\end{equation}
\end{subequations}
For all $k,l\in \NN$ define
$$
q_{kl}=\dfrac{1-[k]_q[l]_q q^{-k+1}}{1-[k]_q[l]_q}.
$$ 
If
\begin{subequations}\label{kl.conditions}
 \begin{equation}\label{kl.condition.1}
q_{kl} \neq{q^i},\qquad  i\in \{-2k+3,-2k+4,\ldots ,-k+1,l,l+1,\ldots ,2l-3, 2l-1\}
\end{equation}
and
\begin{equation}\label{kl.condition.2}
 q_{kl} \neq{q^{2l-2}} q_{lk},
\end{equation}
\end{subequations}
for $(k,l)=(m,n)$ and $(k,l)= (n,m)$,
then $\partial$ and $\bar\partial$ form an orthogonal pair.
\end{proposition}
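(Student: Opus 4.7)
My plan is to invoke Lemma~\ref{lemma.ortho} with explicit witnesses $b_1 = y^m x$ and $b_2 = yx^n$, chosen so that $\bar\partial(b_1)=0$ and $\partial(b_2)=0$. To verify the first of these, I would compute $\bar\partial(y^k)=[k]_q\bar c\,y^{m+k-1}$ by iterated Leibniz, then combine with $\sigma_q(x)=q^{-1}x$ and $y^m\bar\partial(x) = -q^{-1}[m]_q\bar c\,y^m(1-q^{-m+2}h)y^{m-2}$. Using the commutation identities in $D_q(x,y)$ derived from \eqref{q.disc}, namely $yx=1-h$, $hy=qyh$, $hx=q^{-1}xh$, and the consequent rewrite $y^m(1-q^{-m+2}h)y^{m-2}=y^{2m-2}(1-h)$, both summands of $\bar\partial(y^mx)=\bar\partial(y^m)\sigma_q(x)+y^m\bar\partial(x)$ reduce to the common form $\pm q^{-1}[m]_q\bar c\,y^{2m-2}(1-h)$ and cancel. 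The verification $\partial(yx^n)=0$ is analogous, using $\partial(x^l)=cq^{1-l}[l]_q\,x^{n+l-1}$ and $yx^{2n-1}=(1-h)x^{2n-2}$.

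The core of the proof is to then compute $\partial(y^mx)$ and $\bar\partial(yx^n)$ and show that the two-sided ideals they generate equal $D_q(x,y)$. After iterated Leibniz and reduction to normal form, each image takes the shape $y^ax^b\cdot P(h)$ with $P\in\KK[h]$. Subsequently multiplying these images on the left and right by monomials $x^i,y^j$ and collapsing powers through the identities $y^jx^j=\prod_{i=0}^{j-1}(1-q^{-i}h)$ and $x^jy^j=\prod_{i=1}^{j}(1-q^ih)$, one extracts from each ideal a family of purely $h$-polynomial elements; each is a product of distinct linear factors $1-q^ih$ together with a factor whose distinguished root is $q_{mn}^{-1}$ (respectively $q_{nm}^{-1}$). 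Since $q$ is not a root of unity, generating the unit ideal in $\KK[h]$ (and hence in $D_q(x,y)$) reduces to pairwise coprimeness of these linear factors, i.e.\ to the distinctness of their roots. Hypothesis \eqref{kl.condition.1} is precisely the statement that $q_{kl}$ avoids each of the shifts $q^i$ arising in this extraction, across the two ranges $\{-2k+3,\dots,-k+1\}$ (coming from left-multiplication by $y^?$ which conjugates $h\mapsto q^{-?}h$) and $\{l,\dots,2l-3,2l-1\}$ (coming from right-multiplication by $x^?$), applied to both $(k,l)=(m,n)$ and $(k,l)=(n,m)$. Hypothesis \eqref{kl.condition.2} handles the cross-term ensuring that when one aligns the degrees of $\partial(b_1)$ and $\bar\partial(b_2)$ by multiplying $\bar\partial(b_2)$ on the right by $x^{2l-2}$, the resulting shifted root $q^{2l-2}q_{lk}$ remains distinct from $q_{kl}$.

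The hard part will be the explicit identification in the middle step: showing that the polynomial factor extracted from $\partial(y^mx)$---a sum of many terms arising from iterated Leibniz applied to $\partial(y^m)$ combined with the contribution $y^m\partial(x)=cy^mx^n$---collapses, after the appropriate left/right multiplications and use of the cumulative commutation formulas above, to a product of linear factors of the form $1-q^ih$ times a single factor $1-q_{mn}h$. This amounts to a careful bookkeeping of $q$-integer coefficients arising from the twisted Leibniz rule and the commutation of $h$ past powers of $x$ and $y$. Once the identification is achieved, the conditions \eqref{kl.conditions} translate directly into the pairwise coprimeness required by Lemma~\ref{lemma.ortho}, and orthogonality follows.
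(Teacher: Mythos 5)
Your choice of witnesses $b_1=y^mx$, $b_2=yx^n$ does pass the vanishing test: exactly as you indicate, $\bar\partial(y^mx)=q^{-1}[m]_q\bar c\left(y^{2m-2}(1-h)-y^{2m-2}(1-h)\right)=0$ and $\partial(yx^n)=q^{-n+1}[n]_qc\left((1-h)x^{2n-2}-(1-h)x^{2n-2}\right)=0$, and your auxiliary commutation identities are all correct. This is also a genuinely different organisation from the paper, which never invokes Lemma~\ref{lemma.ortho}: it verifies \eqref{ortho.x} directly using two-term combinations such as $\frac{q}{[m]_q}y^n\partial(x)+(1-q^{-m-n+2}h)y^{n-2}\partial(y)$ and a right-multiplied counterpart, each tuned to annihilate $\bar\partial$ while evaluating on $\partial$ to an explicit product of linear factors in $h$ (equations \eqref{ortho.1} and \eqref{ortho.2}); the hypotheses \eqref{kl.conditions} are then read off as the coprimality of those two specific polynomials.

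The gap is the step you yourself defer as ``the hard part'': you never compute the polynomial factor of $\partial(y^mx)$, yet you assert that \eqref{kl.condition.1} is ``precisely'' the root-avoidance condition your extraction requires. That assertion is not correct. For $m=n=2$ one finds $\partial(y^2x)=c(1-h)\left(-(q^2+2q)+(q^2+q+1)h\right)=-cq(q+2)(1-h)(1-q\,q_{22}h)$: the distinguished factor is $1-q\,q_{22}h$, not $1-q^{-1}q_{22}h$ as in \eqref{ortho.1}, nor is it paired with the cofactor $1-q^{2}h$ as in \eqref{ortho.2}. Shifting this single element by $x^k(\cdot)y^k$ and $y^k(\cdot)x^k$ yields coprimality requirements of the form $q_{22}\notin\{1,q,\dots,q^{k-1},q^{-k-1}\}$, whereas the instances of \eqref{kl.condition.1} for $k=l=2$ give $q_{22}\neq q^{-1},q^{3}$ --- different conditions, neither set implying the other. (In this particular case your conditions happen to hold automatically because $q$ is not a root of unity, but you have not established that, and for general $m,n$ there is no reason the conditions your route needs should follow from \eqref{kl.conditions}.) To close the gap you must either carry out the normal-form computation of $\partial(y^mx)$ and $\bar\partial(yx^n)$ in general and prove that \eqref{kl.conditions} implies the coprimality that actually arises --- which is not evident --- or replace the single witnesses by combinations of $\partial(x)$ and $\partial(y)$ chosen, as in the paper, so that the resulting polynomials are exactly those whose coprimality \eqref{kl.conditions} encodes.
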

\begin{proof}
  We will construct sets of elements of $D_q(x,y)$ which will satisfy orthogonality conditions \eqref{ortho.x} for skew derivations \eqref{partials.disc.ortho}. Observe that
\begin{eqnarray*}
\frac{q}{[m]_q} y^n \bar\partial(x) \!\!\!&+&\!\!\! (1-q^{-m-n+2}h)y^{n-2}\bar\partial(y) \\
&=& -\bar{c} y^n (1-q^{-m+2}h)y^{m-2} + \bar{c} (1-q^{-m-n+2}h)y^{m+n-2} = 0,
\end{eqnarray*}
and
\begin{eqnarray*}
\frac{q}{[m]_q} \bar\partial(x) y^n  \!\!\!&+&\!\!\! \bar\partial(y)(1-q^{2}h)y^{n-2} \\
&=& -\bar{c}  (1-q^{-m+2}h)y^{m+n-2} + \bar{c} y^m(1-q^{-2}h) y^{n-2}= 0.
\end{eqnarray*}
On the other hand
\begin{eqnarray} \label{ortho.1}
\frac{q}{[m]_q} \!\!\!\!\!\!&&\!\!\! \!\!\! y^n \partial(x) +(1-q^{-m-n+2}h)y^{n-2}\partial(y) \nonumber \\
&=& \frac{q}{[m]_q} {c} y^n x^n -q[n]_qc \, (1-q^{-m-n+2}h)y^{n-2}(1-h)x^{n-2} \nonumber \\
&=&   \frac{q}{[m]_q} {c} \prod_{k=0}^{n-2}\left(1-q^{-k}h\right)\left(1 - q^{-n+1}h -[m]_q[n]_q\left(1-q^{-m-n+2}h\right)\right) \nonumber\\
&=&qc \left(\frac{1}{[m]_q} -[n]_q\right) \prod_{k=0}^{n-2}\left(1-q^{-k}h\right)\left(1 - q^{-n+1}\frac{1- [m]_q[n]_qq^{-m+1}}{1- [m]_q[n]_q}h\right)\nonumber \\
&=&qc \left(\frac{1}{[m]_q} -[n]_q\right) \prod_{k=0}^{n-2}\left(1-q^{-k}h\right)\left(1 - q^{-n+1}q_{mn} h\right),
\end{eqnarray}
and
\begin{eqnarray}\label{ortho.2}
\frac{q}{[m]_q} \!\!\!\!\!\!&&\!\!\! \!\!\!\partial(x) y^n   + \partial(y)(1-q^{2}h)y^{n-2} \nonumber \\
&=& \frac{q}{[m]_q} {c} x^n y^n -q[n]_qc \, (1-h)x^{n-2}(1-q^{2}h)y^{n-2} \nonumber\\
&=& \frac{q}{[m]_q} {c} (1-q^{n}h) \prod_{k=1}^{n-2}\left(1-q^{k}h\right)\left(1 - q^{n-1}h -[m]_q[n]_q\left(1-h\right)\right) \nonumber\\
&=&qc \left(\frac{1}{[m]_q} -[n]_q\right)(1-q^{n}h)\prod_{k=1}^{n-2}\left(1-q^{k}h\right) \nonumber \\
&& \left(1 - q^{n-1}\frac{1- [m]_q[n]_qq^{-n+1}}{1- [m]_q[n]_q}h\right)\nonumber \\
&=& qc \left(\frac{1}{[m]_q} -[n]_q\right)(1-q^{n}h)\prod_{k=1}^{n-2}\left(1-q^{k}h\right) \left(1 - q^{n-1}q_{nm}h\right) .
\end{eqnarray}
By \eqref{kl.conditions}
$q_{mn}\neq q^{i}$, for all $i\in\{ n,\ldots,2n-3,2n-1\}$,  $q_{nm}\neq q^{i}$, for all $i\in\{-2n+3,-2n+4,\ldots,-n+1\}$ 
and  $q_{mn}\neq q^{2n-2} q_{nm}$. Combining this with the fact that $q$ is not a root of unity,   we conclude that
polynomials in $h$ that appear in \eqref{ortho.1} and \eqref{ortho.2} have no roots in common. Therefore a polynomial combination of them can be found giving 1, and thus the first set of elements that satisfy \eqref{ortho.x} can be constructed.

In a similar way,
\begin{eqnarray*}
(1-q^{m}h)x^{m-2}\partial(x) \!\!\!&+&\!\!\! \frac{q^{-1}}{[n]_q} x^m \partial(y) \\
&=& c (1-q^{m}h)x^{m+n-2} -c x^m (1-h)x^{n-2}= 0,
\end{eqnarray*}
and
\begin{eqnarray*}
\partial(x)(1-q^{-n}h)x^{m-2}  \!\!\!&+&\!\!\! \frac{q^{-1}}{[n]_q} \partial(y) x^{m} \\
&=& c x^n (1-q^{-n}h)x^{m-2} -c(1-h) x^{n+m-2}=0.
\end{eqnarray*}
On the other hand
\begin{eqnarray}\label{ortho.3}
(1-q^{m}h)x^{m-2}\bar\partial(x)\!\!\! &+&\!\!\!  \frac{q^{-1}}{[n]_q}  x^m \bar\partial(y) \nonumber \\
&\hspace{-3.5cm}=&\hspace{-2 cm} -q^{-1}[m]_q\bar{c} \, (1-q^{m}h)x^{m-2}(1-q^{-m+2}h)y^{m-2}+ \frac{q^{-1}}{[n]_q} \bar{c} x^m y^m \nonumber\\
&\hspace{-3.5cm}=&\hspace{-2 cm} \frac{q^{-1}}{[n]_q} \bar{c} (1-q^{m}h) \prod_{k=1}^{m-2}\left(1-q^{k}h\right)\left(-[m]_q[n]_q\left(1-h\right)+1 - q^{m-1}h\right) \nonumber\\
&\hspace{-3.5cm}=&\hspace{-2 cm} q^{-1}\bar{c} \left(\frac{1}{[n]_q} -[m]_q\right)(1-q^{m}h)\prod_{k=1}^{m-2}\left(1-q^{k}h\right) \nonumber \\
 &&\times\left(1 - q^{m-1}\frac{1- [m]_q[n]_qq^{-m+1}}{1- [m]_q[n]_q}h\right)\nonumber \\
&\hspace{-3.5cm}=&\hspace{-2 cm} q^{-1}\bar{c} \left(\frac{1}{[n]_q} -[m]_q\right)(1-q^{m}h)\prod_{k=1}^{m-2}\left(1-q^{k}h\right)\left(1 - q^{m-1}q_{mn}h\right),
\end{eqnarray}
and
\begin{eqnarray}\label{ortho.4}
\bar\partial(x)(1-q^{-n}h)x^{m-2}  \!\!\!&+&\!\!\! \frac{q^{-1}}{[n]_q}\bar\partial(y) x^{m} \nonumber \\
&\hspace{-2.5cm}=&\hspace{-1.5 cm}-q^{-1}[m]_q \bar{c} \, (1-q^{-m+2}h)y^{m-2}(1-q^{-n}h)x^{m-2} +\frac{q^{-1}}{[n]_q} \bar{c} y^m x^m \nonumber \\
&\hspace{-2.5cm}=&\hspace{-1.5 cm} \frac{q^{-1}}{[n]_q} \bar{c} \prod_{k=0}^{m-2}\left(1-q^{-k}h\right)\left(-[m]_q[n]_q\left(1-q^{-n-m+2}h\right)+1 - q^{-m+1}h \right) \nonumber\\
&\hspace{-2.5cm}=&\hspace{-1.5 cm} q^{-1} \bar{c} \left(\frac{1}{[n]_q} -[m]_q\right) \prod_{k=0}^{m-2}\left(1-q^{-k}h\right)
\left(1 - q^{-m+1}\frac{1- [m]_q[n]_qq^{-n+1}}{1- [m]_q[n]_q}h\right)\nonumber \\
&\hspace{-2.5cm}=&\hspace{-1.5 cm} q^{-1} \bar{c} \left(\frac{1}{[n]_q} -[m]_q\right) \prod_{k=0}^{m-2}\left(1-q^{-k}h\right)
\left(1 - q^{-m+1}q_{nm}h\right).
\end{eqnarray}
As before, since $q$ is not a root of unity and by \eqref{kl.conditions}, polynomials in $h$ that appear in \eqref{ortho.3} and \eqref{ortho.4} have no roots in common. Thus a polynomial combination of them can be found giving 1 and then the second set of elements that satisfy \eqref{ortho.x} can be constructed. This proves the statement. 
\end{proof}

\begin{corollary}\label{cor.ortho.disc}
Let $\KK =\CC$ and assume that $q\neq 1$ is a positive  real number. Then, for any  $n\in \NN\setminus\{0,1\}$ and for any non-zero $c,\bar{c} \in \CC$,
the following pair of $\sigma_q$-skew derivations on $D_q(x,y)$, defined on the generators by
\begin{subequations}\label{partials.disc.ortho.ex}
\begin{equation}\label{partial.disc.ortho.ex}
\partial(x) = c x^n, \qquad \partial (y) = -q[n]_qc \, (1-h)x^{n-2}.
\end{equation}
\begin{equation}\label{bar.partial.disc.ortho.ex}
\bar\partial(x) = -q^{-1}[n]_q\bar{c} (1-q^{-n+2}h)y^{n-2}, \qquad \bar\partial(y) = \bar{c}  y^n.
\end{equation}
\end{subequations}
is orthogonal.
\end{corollary}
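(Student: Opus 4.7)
The plan is to invoke Proposition~\ref{prop.ortho.disc} with $m=n$, so that the pair of hypotheses \eqref{kl.conditions} collapses to a single pair of conditions at $(k,l)=(n,n)$. Throughout, the positivity of $q$ enters through the AM--GM estimate $[n]_q=\sum_{a=0}^{n-1}q^a>n\,q^{(n-1)/2}$ for $q>0$, $q\neq 1$ and $n\geq 2$, which gives $[n]_q^2>q^{n-1}$ and in particular $[n]_q^2>1$. This makes the denominator in $q_{nn}$ nonzero; a sign check of numerator and denominator further shows $q_{nn}>0$.

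With these preparations, condition \eqref{kl.condition.2} reduces to $q^{2n-2}\neq 1$, which is immediate for a positive real $q\neq 1$ and $n\geq 2$. Condition \eqref{kl.condition.1} requires $q_{nn}\neq q^i$ for every $i\in\{-2n+3,\ldots,-n+1\}\cup\{n,\ldots,2n-3,2n-1\}$, and I would split the verification into three subcases. The endpoint $i=-n+1$ is disposed of by the direct identity $q_{nn}-q^{-n+1}=(1-q^{-n+1})/(1-[n]_q^2)$, which vanishes only if $q^{n-1}=1$.

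For $i>0$, multiplying $q_{nn}=q^i$ through by the relevant denominators and invoking $q^k-1=[k]_q(q-1)$ reduces the equation to the $q$-integer identity $[n]_q^2[n-1+i]_q=q^{n-1}[i]_q$; after the substitution $[n-1+i]_q=[n-1]_q+q^{n-1}[i]_q$ this rearranges to $[n]_q^2[n-1]_q=-q^{n-1}[i]_q([n]_q^2-1)$, whose two sides have opposite signs, so no positive real solution exists. For $i<0$, $i\neq -n+1$, writing $i=-j$ with $j\in\{n,\ldots,2n-3\}$ and setting $k=j-n+1\in\{1,\ldots,n-2\}$, the analogous reduction yields $[n-1]_q=[k]_q([n]_q^2-q^{n-1})$. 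A coefficient-wise inspection of $[n]_q^2=\sum_{a,b=0}^{n-1}q^{a+b}$ shows that $[n]_q^2-q^{n-1}-[n-1]_q$ is a polynomial in $q$ with strictly positive coefficients, hence $[n]_q^2-q^{n-1}>[n-1]_q$ for $q>0$, and since $[k]_q\geq 1$ the right-hand side of the reduced equation strictly exceeds the left-hand side.

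The main obstacle is this last subcase: whereas the $i>0$ situation is settled at once by a sign mismatch, in the negative-$i$ case both sides of the reduced equation are positive, and the exclusion of solutions relies on a finer polynomial inequality that genuinely uses $q>0$ (it fails over a general field, or even for complex~$q$), reflecting why the corollary is formulated over the positive reals.
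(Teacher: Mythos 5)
Your proof is correct and follows essentially the same route as the paper: both specialize Proposition~\ref{prop.ortho.disc} to $m=n$ and rule out $q_{nn}=q^{i}$ by reducing each case to a $q$-integer identity that is impossible for positive real $q\neq 1$ (the paper first simplifies $q_{nn}=q^{-n}[n+1]_q/([n]_q+1)$, which turns every case into a sum of positive terms, whereas you work from the raw definition of $q_{nn}$ and in the negative-$i$ case need the finer inequality $[n]_q^2-q^{n-1}>[n-1]_q$). One small slip: $[n]_q^2-q^{n-1}-[n-1]_q$ does not have strictly positive coefficients --- its constant term is $0$ --- but since the coefficient of $q^{n-1}$ equals $n-1\geq 1$, the strict inequality for $q>0$ still holds and your argument goes through.
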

\begin{proof}
In this case, the conditions \eqref{kl.conditions} reduce to 
$$
q_{nn} \neq{q^i},\qquad  i\in \{-2n+3,-2n+4,\ldots ,-n+1,n,n+1,\ldots ,2n-3, 2n-1\}.
$$
By using the definitions of $[n]_q$ and $q_{nn}$, one easily finds that
$$
q_{nn} = q^{-n} \frac{[n+1]_q}{[n]_q +1}.
$$
Assuming that $q\neq 1$, for $i\leq -n$, the critical equation
\begin{equation}\label{qnn}
q^{-n} \frac{[n+1]_q}{[n]_q +1} = q^i,
\end{equation}
is equivalent to 
$$
[-i-n+1]_q[n]_q +[-i-n]_q =0.
$$
If $i\leq -n$, and $q$ is positive, all the $q$-numbers on the left hand side are positive, and thus there are no solutions. For $i=-n+1$, \eqref{qnn} is equivalent to $(q-1)^2 = 0$, and thus has no  solutions for $q$ other than $1$. Finally, for positive $i$ and $q\neq 1$ the equation \eqref{qnn} is equivalent to
$$
q[n]_q [n+i-1]_q +[n+i]_q =0,
$$ 
which again has no positive solutions for $q$.
\end{proof}

\end{document}